\documentclass[12pt,amstex]{amsart}

\usepackage{mathptmx}
\usepackage{mathrsfs}

\usepackage{verbatim}
\usepackage{url}
\usepackage[all]{xy}
\usepackage{color}

\usepackage[colorlinks=true,citecolor=blue]{hyperref}

\usepackage{stmaryrd}
\usepackage{epsfig}
\usepackage{amsmath}
\usepackage{amssymb}
\usepackage{appendix}
\usepackage{amscd}
\usepackage{graphicx}
\usepackage{pstricks}

\allowdisplaybreaks

\theoremstyle{plain}
\newtheorem{thm}{Theorem}[section]
\newtheorem{lem}[thm]{Lemma}

\newtheorem{cor}[thm]{Corollary}
\theoremstyle{definition}

\newtheorem{rem}[thm]{Remark}

\numberwithin{equation}{section}

\def \N {\mathbb N}
\def \Z {\mathbb Z}
\def \R {\mathbb R}

\def \U {\mathcal{U}}
\def \A {\mathcal A}

\def \L {\mathcal{L}}

\def \B {\mathcal B}

\def \V {\mathcal V}

\def \U {\mathcal{U}}

\def \a {\alpha }

\def \ep {\epsilon}

\def \C {\mathcal{C}}

\newtheorem{innercustomthm}{}
\newenvironment{customthm}[1]
{\renewcommand\theinnercustomthm{#1}\innercustomthm}
{\endinnercustomthm}

\DeclareMathOperator{\diam}{diam}
\DeclareMathAlphabet\mathbfcal{OMS}{cmsy}{b}{n}

\makeatletter
\@namedef{subjclassname@2020}{\textup{2020} Mathematics Subject Classification}
\makeatother

\begin{document}
		\title[Pressure for the average pseudo-orbits with block sub-additive potentials]{Pressure for the space of average pseudo-orbits with block sub-additive potentials}
	\author{Fangzhou Cai}
	
	\author{Jie Li$^*$}\let\thefootnote\relax\footnote{* Corresponding author.}

	\address[F. Cai]{School of Mathematics and Systems Science, Guangdong Polytechnic Normal University, Guangzhou, 510665, PR China}
	\email{cfz@mail.ustc.edu.cn}
	\address[J. Li]{School of Mathematics and Statistics, Jiangsu Normal University, Xuzhou, Jiangsu 221116, PR China}
	\email{jiel0516@mail.ustc.edu.cn}
\subjclass[2020]{37D35, 37A35, 37C50}
\keywords{Topological pressure; Measure-theoretic pressure; Block sub-additive potential; Average pseudo-orbits.}
\maketitle
\begin{abstract}
In this paper, we introduce the concept of block sub-additive potential. The topological and measure-theoretic pressures are then defined for the space of average pseudo-orbits relative to any block sub-additive potential and any open cover of a given  compact metric space.
 A local variational principle connecting these pressures is established, and it is further proven that they are equivalent to the corresponding topological and  measure-theoretic pressure (in the  ergodic case), respectively, defined for the induced sub-additive potential and the specified open cover.
 Additionally, the global versions of these concepts are also investigated, and a result that bridges the global and local perspectives is presented.

		\end{abstract}

\section{Introduction}		

Let $(X,T)$ be a {\it topological dynamical system} (abbr. {\it TDS})  in the sense that $X$  is a compact metric space endowed with a metric $d$ and $T$ is a continuous, surjective self-map on $X$.
When $T$ is a homeomorphism, the TDS $(X,T)$ is deemed {\it invertible}.
 Entropy is a widely-used invariant for assessing the complexity of TDSs.  The classical concepts of entropy include the measure-theoretic entropy for an invariant measure \cite{Kolomogorov58} and the topological entropy \cite{AKM65}. A bridge connecting  these two concepts is the renowned variational principle \cite{Gwyn,Gman}.
Discovering new topological and measure-theoretic entropy-like invariants, as well as examining the variational interconnections between them, has always been a focal point in the study of entropy theory.

Originating from  ideas in statistical mechanics, Ruelle \cite{R73} introduced the notion of topological pressure for expansive systems,
which  Walters \cite{W75} later extended to the general context.
This topological pressure generalizes topological entropy and the corresponding variational principle as follows \cite{W75}:
\begin{equation}\label{equation-1}
  P(T,f) = \sup_{\mu\in M(X,T)}\Big\{h_\mu(T)+\int_X f \text{d}\mu\Big\},
\end{equation}
where $f\in C(X)$  (the space of all real-valued continuous functions on $X$ endowed with the supremum norm) is the {\it (additive) potential} on $X$, $P(T,f)$ represents the {\it topological pressure for $f$}, $M(X,T)$ denotes the space of all $T$-invariant Borel probability measures on $X$,  and $h_\mu(T)$ is the measure-theoretic entropy of $\mu$.

To investigate non-conformal repellers, Falconer \cite{Falconer88}
presented a thermodynamic formalism for sub-additive potentials, deriving  a variational principle subject to certain supplementary conditions on these potentials. In 2008, Cao, Feng and Huang  \cite{cfh} extended the variational principle  to sub-additive potentials in general TDSs, discarding all prior assumptions.
Specifically, for a {\it sub-additive potential} $\mathcal{F}=\{f_n\}_{n=1}^\infty\subset C(X)$, defined by
$${f}_{m+n}(x)\le {f}_m(x)+f_n(T^mx)$$
 for any $x\in X$ and any positive integers $m, n$,
the equation \eqref{equation-1} can be generalized to the following:
\begin{equation}\label{equation-2}
  P(T,\mathcal{F}) = \sup_{\mu\in M(X,T)}\big\{h_\mu(T)+  \mathcal{F}_*(\mu)  \big\},
\end{equation}
where $\mathcal{F}_*(\mu)=\lim_{n\to \infty}	\frac{1}{n}\int f_n\ \text{d}\mu$
(the existence of limit follows from a sub-additive argument),
 and $P(T,\mathcal{F})$ denotes the {\it topological pressure for sub-additive potential $\mathcal{F}$}  defined via separated sets.
 To avoid ambiguity, we consistently assume that $\mathcal{F}_*(\mu)>-\infty$. The thermodynamic formalism for sub-additive potentials has proven invaluable  in analyzing  Lyapunov exponents of matrix products and  dimensional theory for  non-conformal  TDSs (see, e.g., \cite{Feng04, BCH10} ).

Inspired by local entropy theory, Romagnoli \cite{R03} introduced measure-theoretic entropies for open covers and established a local variational principle. This principle was later generalized by Huang and Yi \cite{hy} to the pressure for additive potentials, and further extended to sub-additive potentials in \cite{zhang09, CDC10} along different lines. Specifically, equation \eqref{equation-2} can be localized to:
\begin{equation}\label{equation-3}
  P(T,\mathcal{F}, \U) = \max_{\mu\in M(X,T)}\big\{h_\mu(T,\U)+  \mathcal{F}_*(\mu)  \big\},
\end{equation}
where $h_\mu(T,\U)$ denotes the measure-theoretic entropy of $\mu$ relative to the open cover $\U$. Analogous to \cite{ZC09}, we define
$$P_\mu(T,\mathcal{F}, \U):=h_\mu(T,\U)+ \mathcal{F}_*(\mu)$$
 as the {\it measure-theoretic pressure} for the sub-additive potential $\mathcal{F}$ and  open cover $\U$. The relationships between this pressure and other types of local measure-theoretic sub-additive pressures were investigated  in \cite{zhang09}.
 The  variational principles formulated in  \cite{hy, zhang09, CDC10} have been expanded in various directions,  including a relative local variational principle for sub-additive potentials \cite{MC13}, a conditional local variational principle for additive potentials \cite{SL23}, and a local variational principle for strongly sub-additive potentials under countable discrete amenable group actions \cite{ly}, etc.

An intriguing question emerges concerning the applicability of the aforementioned results to general sofic group actions.
Bowen \cite{Bowen10} introduced sofic measure entropy for actions of countable discrete sofic groups. Alternatively, from an operator algebra perspective, Kerr and Li \cite{kl11} developed a method to derive both measure-theoretic and topological entropy for general sofic actions.  Notably, these entropies are connected by a variational principle as in the classical case \cite[Section 9]{kl11}, and coincide with their classical counterparts when the  group is amenable \cite{Bowen12,kl13}.
Zhang  \cite{zhang12} further extended these results to local versions relative to open covers of the space.
Following the method proposed by Kerr and Li  \cite{kl11, kl13}, Chung \cite{chung} introduced the topological pressure for continuous actions of countable discrete sofic groups on compact metric space, demonstrating that the variational principle \eqref{equation-1} can be extended to the sofic context and, for amenable groups, the sofic topological pressure equates to the classical topological pressure.

The works in \cite{kl11} and \cite{chung} imply that the sofic entropy or pressure essentially measures  the complexity of (periodic) average pseudo-orbits relative to sofic approximation sequences. For a comprehensive understanding of the specific case $\Z$, please refer to \cite[Section 4]{Bowen12} and \cite[Section 2]{Bowen18}. Moreover, for $\Z$-actions,
research on pseudo-orbits and their invariants has garnered extensive attention in both experimental and theoretical realms, with pertinent studies on entropy and pressure referring to \cite{BS90, cai, CL24, cl, HZ03} and their bibliographies.
 Notably, in our recent work \cite{CL24}, we devised  a  measure-theoretic approach to demonstrate that the topological pressure of average pseudo-orbits for additive potentials coincides with the classical topological pressure, corresponding to a special case of \cite[Theorem 1.1]{chung}.
 Building on these foundations, {\bf the present paper endeavors to develop, under $\Z_+$-actions,  a localized sub-additive pressure theory within the space of average pseudo-orbits}.
%

\smallskip
 In general, an average pseudo-orbit  allows a minor average deviation over the long run of iterations (see subsection \ref{subsect:average_pseudo_orbit}).
To capture the dynamics of  average pseudo-orbits, we introduce the {\it block sub-additive potential} $\mathbfcal{F}=\{{\bf f}_n: {\bf f}_n\in C(X^n)\}_{n=1}^\infty$,  defined by the sub-additive inequality: $${\bf f}_{n+m}(x_0,x_1,\ldots,x_{n+m-1})\leq {\bf f}_n(x_0,\ldots,x_{n-1})+{\bf f}_m(x_{n},\ldots,x_{n+m-1})$$
 for all $n$-tuples  $(x_0,\dots,x_{n-1})\in X^n$ and all positive integers $m, n$.  This concept is indeed motivated by the research \cite{Y03, K04, FH10, C20} on weak Gibbs measures, equilibrium states, and quasi-Bernoulli measures, where cylinder functions (solely dependent on initial finite coordinates)  were used to produce special sub-additive potentials on full shift symbolic spaces. Notably, both the additive and block sub-additive potentials are independent of the dynamical action $T$, whereas sub-additive potential is $T$-dependent.
 Given a block sub-additive potential $\mathbfcal{F}=\{{\bf f}_n: {\bf f}_n\in C(X^n)\}_{n=1}^\infty$, its restriction to real orbit space produces  naturally a  sub-additive potential $\mathcal{F}=\{f_n: f_n\in C(X)\}_{n=1}^\infty$, defined by
$$f_n(x):={\bf f}_n(x,Tx,\ldots,T^{n-1}x), \forall \ x\in X.$$
In this paper, we consistently utilize the standard script symbol $\mathcal{F}$ to denote the induced sub-additive potential, in order to distinguish it from  block sub-additive potential $\mathbfcal{F}$ in  bold style.

Borrowing ideas from \cite{zhang12, chung}, we can introduce the topological and measure-theoretic pressure, denoted by $GP(T, \mathbfcal{F}, \U)$ and $GP_\mu(T, \mathbfcal{F}, \U)$, respectively, for a block sub-additive potential $\mathbfcal{F}$ and an open cover $\U$ (see subsections \ref{subsect:topological-gsp} and \ref{subsect:measure-gsp}).
Analogous to \cite[Theorem 6.1]{kl11}, as well as \cite[Theorem 4.1]{zhang12} and \cite[Theorem 1.2]{chung}, we can establish a local variational principle for these pressures as equation \eqref{equation-3}.

Denote $\C_X^o$ by the collection of all open covers of $X$.
\begin{thm}\label{thm:local variational principle}
Let $(X,T)$ be a TDS, $\mathbfcal{F}=\{{\bf f}_n\}_{n=1}^\infty$ be a block sub-additive potential and $\U\in \C_X^o$. Then
	$$ GP(T,\mathbfcal{F},\U)=\max_{\mu\in M(X,T)}GP_\mu(T,\mathbfcal{F},\U).$$
\end{thm}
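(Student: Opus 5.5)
The plan is to reduce Theorem \ref{thm:local variational principle} to the classical local variational principle \eqref{equation-3} for the induced sub-additive potential $\mathcal{F}=\{f_n\}$, $f_n(x)={\bf f}_n(x,Tx,\ldots,T^{n-1}x)$. This requires two comparison results. The first, which I expect to be the heart of the matter, is the topological identity $GP(T,\mathbfcal{F},\U)=P(T,\mathcal{F},\U)$. The second is the measure-theoretic statement $GP_\mu(T,\mathbfcal{F},\U)\le h_\mu(T,\U)+\mathcal{F}_*(\mu)$ for every $\mu\in M(X,T)$, with equality when $\mu$ is ergodic. Granting both, we argue as follows. By \eqref{equation-3} and upper semicontinuity, the maximum of $h_\mu(T,\U)+\mathcal{F}_*(\mu)$ is attained. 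Because this functional is affine (as in \cite{hy,zhang09}), the ergodic decomposition shows it is also attained at an ergodic $\mu$. At that $\mu$ the two pressures coincide, and everywhere else $GP_\mu$ is dominated by the classical pressure. So $\max_\mu GP_\mu(T,\mathbfcal{F},\U)$ exists and equals $P(T,\mathcal{F},\U)=GP(T,\mathbfcal{F},\U)$.

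For the topological comparison I follow the measure-theoretic strategy of \cite{CL24}. The inequality $GP\ge P$ is the easy one: genuine orbit segments are average pseudo-orbits (periodic ones after a small modification near the endpoint, with negligible cost in the averaged potential by uniform continuity of the ${\bf f}_n$). Hence the covering counts for pseudo-orbits dominate those for real orbits, and ${\bf f}_n$ evaluated on an orbit is exactly $f_n$.

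For $GP\le P$, I fix $\delta$ and $n$ and pass from average $\delta$-pseudo-orbits of length $n$ to empirical measures on $X^m$ for a fixed block length $m$. Block sub-additivity lets me bound ${\bf f}_n(x_0,\ldots,x_{n-1})$ by an average over the $m$ residue classes of sums of ${\bf f}_m$ on consecutive blocks, up to $O(m\|{\bf f}_1\|)$ error. This expresses $\frac1n{\bf f}_n$ through the integral of ${\bf f}_m$ against the empirical $m$-block measure. As $\delta\to0$, these block measures approach measures concentrated near real orbit graphs $\{(x,Tx,\ldots,T^{m-1}x)\}$. Limits of the empirical measures on $X$ are $T$-invariant, and the $m$-block integral tends to $\int f_m\,d\mu$.

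On the entropy side, I count pseudo-orbits by the cover $\U^{n}$ restricted to those with a prescribed empirical distribution. A standard counting argument, in the spirit of Misiurewicz's proof and \cite[Section 6]{kl11}, \cite{zhang12}, bounds the exponential growth by $h_\mu(T,\U)$ for some weak$^*$ limit $\mu$. Choosing $n$, then $\delta$, along sequences and letting $m\to\infty$ via $\mathcal{F}_*(\mu)=\inf_m\frac1m\int f_m\,d\mu$ yields $GP(T,\mathbfcal{F},\U)\le\sup_\mu\{h_\mu(T,\U)+\mathcal{F}_*(\mu)\}=P(T,\mathcal{F},\U)$.

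The main obstacle is this counting step. For open covers, $h_\mu(T,\U)$ is only upper semicontinuous in a delicate sense, so I would use the local variational machinery of \cite{hy} (the Huang--Ye combinatorial lemma on refining covers by finite partitions). With it, I replace $\U$ by a partition $\alpha\succeq\U$ whose boundary has small $\mu$-measure; empirical counts of $\alpha$-names then converge.

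The measure-theoretic comparison is handled in parallel. The upper bound $GP_\mu\le h_\mu(T,\U)+\mathcal{F}_*(\mu)$ is the localized version of the same counting argument, restricted to pseudo-orbits whose empirical measures lie in a weak$^*$ neighbourhood of $\mu$. For ergodic $\mu$, the reverse bound comes from taking $\mu$-generic real orbit segments, given by the Birkhoff and Kingman theorems, as pseudo-orbits, combined with the local Shannon--McMillan--Breiman-type estimate for covers from \cite{hy,R03}. This gives enough distinct $\U^n$-names with ${\bf f}_n\approx n\mathcal{F}_*(\mu)$.
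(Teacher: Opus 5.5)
Your top-level reduction is logically sound. Theorem~\ref{thm:local variational principle} does follow from three facts: (i) $GP(T,\mathbfcal{F},\U)\le P(T,\mathcal{F},\U)$; (ii) $GP_\mu(T,\mathbfcal{F},\U)\ge h_\mu(T,\U)+\mathcal{F}_*(\mu)$ for ergodic $\mu$; and (iii) the existence of an ergodic maximizer in \eqref{equation-3}. You do not need the upper bound of Theorem~\ref{thm:measure-relation}, since $GP_\mu\le GP$ is immediate from $X^n_{\delta,\mu,L}\subset X^n_\delta$. Step (ii) is elementary: restrict covers to $\mu$-generic orbit segments and compare with $H_\mu(\U_0^{n-1})$ via Lemma~\ref{lw}; no SMB-type theorem is needed.

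The gap is in (i), which carries all the weight, and your counting sketch does not establish it. There are two problems.

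First, a Misiurewicz-type bound for an open cover requires selecting finitely many pseudo-orbits so that each atom of $\alpha^n$ contains at most one of them (as in Lemma~\ref{l3}). That selection depends on the partitions $\alpha$. But you want the $\alpha$'s to have $\mu$-null boundary for the limit measure $\mu$, and $\mu$ is itself determined by the selected points, so the construction is circular. Your phrase ``restricted to those with a prescribed empirical distribution'' points to the only way out: first produce a $T$-invariant $\mu$ whose neighbourhoods already carry the full growth rate, i.e.\ $GP_{\delta,\mu,L}(T,\mathbfcal{F},\U)\ge GP(T,\mathbfcal{F},\U)$ for all $\delta>0$ and $L\in\L_{C(X)}$. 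Together with the trivial $GP_\mu\le GP$, that \emph{is} Theorem~\ref{thm:local variational principle}, so the detour through (i)--(iii) becomes redundant.

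Second, even with $\mu$ fixed, counting via partitions $\alpha\succeq\U$ only gives the bound $h_\mu(T,\alpha)+\mathcal{F}_*(\mu)$. For a single $\alpha$ this is too weak (take $\U=\{X\}$). Taking the infimum over countably many boundary-null $\alpha_l\in\U^*$ yields $h^+_\mu(T,\U)$, not $h_\mu(T,\U)$. The identity $h^+_\mu=h_\mu$ is available only for invertible $T$ (Lemma~\ref{lb}). For general $T$ you must pass to the natural extension and lift average pseudo-orbits through the open factor map (Lemma~\ref{lem:lift}), which your plan does not address.

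The paper's proof uses no entropy theory at all. For fixed $L$ and $\delta$, compactness of $M(X)$ gives a finite $F\subset M(X)$ with $\bigcup_{\nu\in F}X^n_{\delta,\nu,L}=X^n_\delta$ for every $n$. Pigeonhole then yields $\nu\in F$ with $GP_{\delta,\nu,L}\ge GP_\delta\ge GP$. Doing this for $L_m=\{g_1,\ldots,g_m\}$ and $\delta=1/m$, extracting $\nu_m\to\mu$, and using $X^n_{1/m,\nu_m,L_m}\subset X^n_{\delta,\mu,L}$ for large $m$ gives $GP_\mu\ge GP$. The measure $\mu$ is $T$-invariant because empirical measures of average pseudo-orbits are almost invariant: for $p_L(f)\le1$,
$\bigl|\frac1k\sum_{i=0}^{k-1}f(x_i)-\frac1k\sum_{i=0}^{k-1}f(Tx_i)\bigr|\le\frac1k\sum_{i=0}^{k-2}d(Tx_i,x_{i+1})+\frac{2\|f\|_\infty}{k}$.

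In the paper, (i) is a \emph{consequence} of Theorems~\ref{thm:local variational principle} and~\ref{thm:measure-relation}. It is also proved independently in Section~\ref{Sect:proof-thm3}: an average pseudo-orbit is cut into disjoint length-$N$ blocks that are genuine $(N,\delta_1)$-pseudo-orbits, and these are covered by a near-optimal open family $\V_1\succeq\U^N$, at combinatorial cost $C_n^{[2n\delta_1]}$. If you want to keep your route, that shadowing argument is the non-circular way to get (i). Even so, the result is much heavier than the direct compactness proof.
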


Our second main result  establishes the connection between measure-theoretic pressure associated with block sub-additive potentials and that with sub-additive potentials.

\begin{thm}\label{thm:measure-relation}
Let $(Y,S)$ be a TDS, $\mathbfcal{F}=\{{\bf f}_n\}_{n=1}^\infty$ be  a block sub-additive potential, $\mu\in M(Y,S)$ and $\U\in \C_Y^o$. Then for the sub-additive potential $\mathcal{F}$ induced by $\mathbfcal{F}$, we have
 $$ GP_\mu (S,\mathbfcal{F},\U)\leq P_\mu(S,\mathcal{F}, \U)= h_{\mu}(S,\U)+\mathcal{F}_*(\mu).$$
If additionally $\mu$ is ergodic, then
$$ GP_\mu (S,\mathbfcal{F},\U)=P_\mu(S,\mathcal{F}, \U)= h_{\mu}(S,\U)+\mathcal{F}_*(\mu).$$
\end{thm}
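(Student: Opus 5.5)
We have not yet seen the definition of $GP_\mu$, so we assume it follows Kerr--Li/Zhang/Chung. Fix a finite measurable partition or a set of continuous test functions $L$, a tolerance $\delta>0$, and a length $n$. Let $\mathrm{Map}_\mu(L,n,\delta)$ be the set of $\delta$-average pseudo-orbits $(x_0,\dots,x_{n-1})\in Y^n$ whose empirical measure $\frac1n\sum_i \delta_{x_i}$ is $L$-close to $\mu$. Then $GP_\mu(S,\mathbfcal{F},\U)$ is obtained as $\inf_{L,\delta}\limsup_n \frac1n\log$ of the minimal weighted cover quantity
$$\sum_{V} \sup_{{\bf x}\in V\cap \mathrm{Map}_\mu} \exp {\bf f}_n({\bf x}),$$
where $V$ runs over sets of the form $U_{i_0}\times\cdots\times U_{i_{n-1}}$ from $\U^n$ covering $\mathrm{Map}_\mu$. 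The plan is to prove the two inequalities separately. The upper bound holds for every invariant $\mu$. The lower bound uses ergodicity, since an ergodic $\mu$ provides genuine orbits whose empirical measures approximate $\mu$.

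\textbf{Upper bound.} Fix $m$ and use block sub-additivity to write ${\bf f}_n({\bf x})\le \sum_j {\bf f}_m(x_{jm+r},\dots,x_{jm+r+m-1})+O(m\|{\bf f}_1\|)$, averaged over the offset $r$. Each block ${\bf f}_m(x_{k},\dots,x_{k+m-1})$ is close to $f_m(x_k)={\bf f}_m(x_k,Sx_k,\dots,S^{m-1}x_k)$ whenever $d(x_{k+i},S^ix_k)$ is small for $i<m$. Since the average deviation is at most $\delta$, this fails only for a fraction $O(m\delta/\eta)$ of the indices $k$. By uniform continuity of ${\bf f}_m$ on $Y^m$ this gives
$$\frac1n {\bf f}_n({\bf x})\le \frac1m\int f_m\, d\big(\tfrac1n\textstyle\sum\delta_{x_i}\big)+\epsilon ,$$
and, putting $f_m\in L$, the right side is at most $\frac1m\int f_m d\mu+2\epsilon$. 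Pulling this uniform bound out of the sum reduces $GP_\mu$ to the unweighted local sofic-type entropy of $\mu$ relative to $\U$ along average pseudo-orbits. That quantity is at most $h_\mu(S,\U)$: the point is that for $\mu\in M(Y,S)$ the $\mu$-relevant pseudo-orbits can be covered using a Romagnoli/Huang--Yi partition argument. We take this as available from the additive case in \cite{CL24} and the local entropy results of \cite{zhang12}, adapted to $\Z_+$. Letting $m\to\infty$ then gives $GP_\mu\le h_\mu(S,\U)+\mathcal F_*(\mu)$.

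\textbf{Lower bound for ergodic $\mu$.} Genuine orbit segments $(x,Sx,\dots,S^{n-1}x)$ are $0$-pseudo-orbits. By the ergodic theorem and Kingman's sub-additive ergodic theorem, for large $n$ there is a set $E_n$ with $\mu(E_n)>1-\epsilon$ on which the empirical measures are $L$-close to $\mu$ and $\frac1n f_n(x)\ge \mathcal F_*(\mu)-\epsilon$. Now take any cover of $\mathrm{Map}_\mu$ by sets $V$ as above. It induces a cover of $E_n$ by the sets $\{x:(x,\dots,S^{n-1}x)\in V\}$, which are elements of $\U_0^{n-1}$. The weighted sum is therefore at least
$$N(\U_0^{n-1}\text{ restricted to }E_n)\cdot e^{n(\mathcal F_*(\mu)-\epsilon)}.$$
The remaining entropy estimate $\liminf\frac1n\log N(\U_0^{n-1}|_{E_n})\ge h_\mu(S,\U)$ for sets with $\mu(E_n)>1-\epsilon$ is the local Shannon--McMillan--Breiman type covering lemma for ergodic measures. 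This is the step I expect to be the main obstacle. The reason is that $h_\mu(S,\U)$ is defined as an infimum over partitions finer than $\U$, so it cannot simply be read off a single partition. I would try Romagnoli's approach first: from a minimal subcover of $E_n$, build a partition $\alpha\succeq\U$ (after recoding blocks, as in Huang--Ye--Zhang) with $H_\mu(\alpha_0^{n-1})\lesssim \log N+n\epsilon$, and then use ergodicity to pass to the limit. This gives $GP_\mu\ge h_\mu(S,\U)+\mathcal F_*(\mu)$.
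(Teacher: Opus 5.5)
Your treatment of the potential in the upper bound works, and it is more elementary than the paper's. For fixed $m$, block sub-additivity, Markov's inequality on the deviations $d(Sx_j,x_{j+1})$ and uniform continuity of ${\bf f}_m$ on $Y^m$ give $\frac1n{\bf f}_n({\bf x})\le\frac1m\int f_m\,d\mu+2\epsilon$ uniformly on $Y^n_{\delta,\mu,L}$, once $f_m\in L$ and $\delta$ is small. This replaces the paper's use of Lemma \ref{subadd} on $(Y^\N,\sigma)$.

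\textbf{The gap in the upper bound.} After you pull that bound out, what remains is
\[
\inf_{L,\delta}\varlimsup_{n\to\infty}\frac1n\log N\big(\U^n,Y^n_{\delta,\mu,L}\big)\le h_\mu(S,\U).
\]
This is exactly the theorem for the zero potential, i.e.\ the whole difficulty, and you assume it rather than prove it. It cannot be taken from \cite{CL24}, which is a global topological statement for additive potentials. Nor can it be taken from \cite{zhang12}, which concerns group actions and hence invertible maps. Here $S$ is an arbitrary continuous surjection and the average pseudo-orbits are not periodic.

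Moreover, a Romagnoli/Huang--Yi type partition argument only yields $h^+_\mu(S,\U)=\inf_{\a\succeq\U}h_\mu(S,\a)$. Passing from this to $h_\mu(S,\U)$ uses Lemma \ref{lb}, which requires invertibility.

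The paper supplies this step as follows:
\begin{itemize}
\item It chooses $\a_l\in\U^*$ with $\mu(\partial\a_l)=0$ and $\inf_l h_\mu(S,\a_l)=h^+_\mu(S,\U)$ (Lemma \ref{count}).
\item Via Lemma \ref{l3} it selects finite sets $B_n\subset Y^n_{\delta_n,\mu,L_n}$ with at most one point per atom of $(\a_l)^n$ and $\sum_{{\bf x}\in B_n}e^{{\bf f}_n({\bf x})}\ge\frac1{2n}GP_{n,\delta_n,\mu,L_n}(S,\mathbfcal{F},\U)$.
\item It builds the Gibbs-weighted measures $\tilde\eta_n$ on $Y^\N$ and their shift averages $\tilde\nu_n$, and proves $\tilde\nu_n\to\phi_*(\mu)$. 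This is exactly where small average deviation turns pseudo-orbits into genuine orbits.
\item It runs a Misiurewicz-type argument with the boundary-null partitions $\pi_0^{-1}\a_l$.
\item The non-invertible case then needs Lemma \ref{lem:lift}. This lifts average pseudo-orbits and measures through the open natural-extension map, using continuity of $p^{-1}:Y\to 2^X$.
\end{itemize}
None of this appears in your proposal.

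\textbf{The lower bound.} Your lower bound is correct and takes a mildly different route from the paper's Step 2. You use Kingman's theorem to get $\frac1nf_n\ge\mathcal F_*(\mu)-\epsilon$ on a large set $E_n$ of genuine orbits, and then reduce to a covering number. The paper instead applies Lemma \ref{lw} with weights $\mu(B)/\mu(Y_n)$, handling $\int_{Y_n}f_n\,d\mu$ and the entropy together without Kingman.

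However, the step you flag as the main obstacle is easy, and your proposed recoding aims at the wrong quantity. By definition $h_\mu(S,\U)=\lim_n\frac1nH_\mu(\U_0^{n-1})$, where $H_\mu(\U_0^{n-1})$ is an infimum over all partitions finer than $\U_0^{n-1}$. It is not an infimum over joins $\a_0^{n-1}$; that would give $h^+_\mu$. So take a partition of $E_n$ subordinate to a minimal subcover, and adjoin a partition of $E_n^c$ finer than $\U_0^{n-1}$ with at most $|\U|^n$ atoms. This gives
\[
H_\mu(\U_0^{n-1})\le\log N(\U_0^{n-1},E_n)+\mu(E_n^c)\,n\log|\U|+\log 2,
\]
hence $\varliminf_n\frac1n\log N(\U_0^{n-1},E_n)\ge h_\mu(S,\U)-\epsilon\log|\U|$, with no Shannon--McMillan--Breiman input. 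The real difficulty of the theorem therefore lies in the upper bound, which is exactly the part you deferred.
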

Theorem \ref{thm:measure-relation} reveals that the local measure-theoretic pressure with a block sub-additive potential exhibits analogous characteristics to the local Katok entropy \cite{K80}. It is unclear whether the equality holds for general invariant measures. The lifting property, stated in Lemma \ref{lem:lift}, plays a pivotal role in the proof of Theorem \ref{thm:measure-relation}, guaranteeing the validity of the conclusion for $\Z_+$-action.

\medskip
For the topological case,  we show that the topological pressure of a block sub-additive potential  coincides with that of its induced sub-additive potential.
\begin{thm}\label{thm:topological-relation}
	Let $(X,T)$ be a TDS, ${\mathbfcal{F}}=\{{\bf f}_n\}_{n=1}^\infty$ be a block sub-additive potential and $\U\in \C_X^o$. Then for the sub-additive potential $\mathcal{F}$ induced by $\mathbfcal{F}$, we have
	$$GP(T,\mathbfcal{F},\U)= P(T,\mathcal{F},\U).$$
\end{thm}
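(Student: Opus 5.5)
The plan is to sandwich $GP(T,\mathbfcal{F},\U)$ between two copies of $P(T,\mathcal{F},\U)$, combining the two variational principles with Theorem \ref{thm:measure-relation}.

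\emph{Upper bound.} By Theorem \ref{thm:local variational principle} there is $\mu\in M(X,T)$ with $GP(T,\mathbfcal{F},\U)=GP_\mu(T,\mathbfcal{F},\U)$. Theorem \ref{thm:measure-relation}, applied with $(Y,S)=(X,T)$, gives $GP_\mu(T,\mathbfcal{F},\U)\le h_\mu(T,\U)+\mathcal{F}_*(\mu)$. The local variational principle \eqref{equation-3} for sub-additive potentials then bounds the right-hand side by $P(T,\mathcal{F},\U)$.

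\emph{Lower bound.} We want $P(T,\mathcal{F},\U)\le GP(T,\mathbfcal{F},\U)$. By \eqref{equation-3} there is $\mu\in M(X,T)$ attaining $P(T,\mathcal{F},\U)=h_\mu(T,\U)+\mathcal{F}_*(\mu)$. The obstacle is that Theorem \ref{thm:measure-relation} gives equality only for ergodic $\mu$, and we cannot simply pass to an ergodic component. The reason is that $\mu\mapsto h_\mu(T,\U)$ is in general only concave, not affine, over ergodic decompositions, so the ergodic components need not do as well as $\mu$.

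\emph{First approach: reduce to ergodic measures.} I would first try to show that the supremum in \eqref{equation-3} can be taken over ergodic measures. One route is the standard fact $P(T,\mathcal{F},\U)=\lim_k \frac1k P(T^k,\mathcal{F}^{(k)},\U_0^{k-1})$, combined with the known result that for open covers $h_\mu(T,\U)$ equals the integral of $h_{\mu_x}(T,\U)$ over the ergodic decomposition (Huang–Ye type results, valid for the ``$h^+$'' version, which agrees with $h_\mu(T,\U)$). If $h_\mu(T,\U)=\int h_{\mu_x}(T,\U)\,d\mu(x)$, then, since $\mathcal{F}_*$ is affine, some ergodic component $\nu$ satisfies $h_\nu(T,\U)+\mathcal{F}_*(\nu)\ge P(T,\mathcal{F},\U)$. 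Applying Theorem \ref{thm:measure-relation} with equality and then Theorem \ref{thm:local variational principle} gives
$$P(T,\mathcal{F},\U)\le GP_\nu(T,\mathbfcal{F},\U)\le GP(T,\mathbfcal{F},\U).$$

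\emph{Fallback: a direct topological comparison.} If that ergodic-decomposition fact is not available in the paper, I would argue topologically. Every genuine orbit segment $(x,Tx,\dots,T^{n-1}x)$ is an average pseudo-orbit, so the sets of pseudo-orbit tuples used to define $GP$ contain the true orbit tuples. On these tuples ${\bf f}_n$ agrees with $f_n$. Hence any count over covers by elements of $\U$ (weighted by $e^{\sup f_n}$) for $P$ is dominated by the corresponding count over pseudo-orbits for $GP$. This should give $P(T,\mathcal{F},\U)\le GP(T,\mathbfcal{F},\U)$ directly. The remaining care is that $GP$ uses pseudo-orbit spaces and possibly sup/inf over those, but enlarging the space only increases the quantity. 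I expect this monotonicity step to be routine once the definitions are unwound, and the upper bound to be the substantive use of the previous theorems.
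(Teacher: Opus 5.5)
Your argument is correct and is essentially the paper's own (non-topological) proof. The upper bound is exactly the chain through Theorem~\ref{thm:local variational principle}, Theorem~\ref{thm:measure-relation} and \eqref{equation-3}. Your ``fallback'' lower bound is the paper's ``clear from definitions'' step: pulling back any $\V\succeq\U^n$ covering $X^n_\delta$ along $x\mapsto(x,Tx,\dots,T^{n-1}x)$, whose image lies in $X^n_\delta$, gives a Borel cover of $X$ finer than $\U_0^{n-1}$ with no larger weighted sum, so $P_n(T,\mathcal{F},\U)\le GP_{n,\delta}(T,\mathbfcal{F},\U)$ for every $\delta>0$. You should lead with that definitional argument and drop the ergodic-decomposition detour, which is unnecessary; your remark that $\mu\mapsto h_\mu(T,\U)$ is only concave also contradicts the integral formula you then invoke.
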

\begin{proof} $ P(T,\mathcal{F},\U)\leq GP (T,\mathbfcal{F},\U)$ is clear from definitions. For the opposite side, by Theorem \ref{thm:local variational principle}, Theorem \ref{thm:measure-relation} and equation \eqref{equation-3}, there exists $\mu\in M(X,T)$ such that
$$ GP (T,\mathbfcal{F},\U)=GP_\mu(T,\mathbfcal{F},\U)\leq h_{\mu}(T,\U)+\mathcal{F}_*(\mu)\leq P(T,\mathcal{F},\U).$$
This completes the proof.
\end{proof}
Inspired by the work of Kerr and Li \cite{kl11}, we can provide an alternative  topological proof of Theorem \ref{thm:topological-relation}.
 The core idea involves decomposing an average pseudo-orbit into segments of subwords, which can be approximately shadowed by a finite union of a few long real orbits.

\medskip
As demonstrated in \cite[Chapter 9]{W82} and \cite{cfh}, we can similarly introduce global pressure notions $GP(T,\mathbfcal{F})$ and $GP_\mu(X,\mathbfcal{F})$ for invariant measure $\mu$,  defined through separated sets for block sub-additive potentials (see subsections \ref{subsect:topological-gsp} and \ref{subsect:measure-gsp}).

The following theorem establishes a connection between the local and global cases.
\begin{thm}\label{thm:global-local}
Let $(X,T)$ be a TDS, and  $\mathbfcal{F}=\{{\bf f}_n\}_{n=1}^\infty$ be a block sub-additive potential. Then	
$$GP(T,\mathbfcal{F})=\sup_{\U\in \mathcal{C}^o_X}GP (T,\mathbfcal{F},\U),$$
and
$$GP_\mu(T,\mathbfcal{F})=\sup_{\U\in \mathcal{C}^o_X}GP_\mu (T,\mathbfcal{F},\U).$$
\end{thm}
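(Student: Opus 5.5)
Both identities will be proved by comparing the local and global quantities at the level of finite approximations, and then taking limits. The definitions are not shown yet, so I assume they follow the sofic-style templates of \cite{kl11, zhang12, chung}. In that setup, $GP(T,\mathbfcal{F})$ is a supremum over $\varepsilon>0$ of growth rates of sums $\sum_{\bf x}\exp({\bf f}_n({\bf x}))$ over $(\rho,\varepsilon)$-separated sets of $\delta$-average pseudo-orbits of length $n$. The local version $GP(T,\mathbfcal{F},\U)$ replaces separated sets by the minimal cardinality, weighted by $\sup \exp {\bf f}_n$, of subfamilies of $\U^n$ that cover the pseudo-orbits. The measure versions restrict to pseudo-orbits whose empirical measures lie in a weak$^*$-neighbourhood of $\mu$.

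For ``$\ge$'', fix $\U$ with Lebesgue number $2\varepsilon$. Any $\U^n$-subcover of the pseudo-orbit space is handled by choosing a maximal $(\rho_n,\varepsilon)$-separated set $E$ of pseudo-orbits. The $\varepsilon$-balls (in $\rho_n$, the average or sup metric on $X^n$, whichever the paper uses) around $E$ cover the space, and each ball lies in a member of $\U^n$ coordinatewise. So the weighted covering number is at most $\sum_{{\bf x}\in E}\exp(\sup_{B({\bf x},\varepsilon)}{\bf f}_n)$.

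The oscillation of ${\bf f}_n$ on an $\varepsilon$-ball must be controlled. This is the point needing care, since ${\bf f}_n$ is only sub-additive and not additive. I will use sub-additivity: fix a block length $k$ and write ${\bf f}_n\le \sum_j {\bf f}_k(\text{block}_j)+C_k$. By uniform continuity of ${\bf f}_k$ on $X^k$, nearby tuples change each block term by at most $\eta(\varepsilon)$. Dividing by $n$, letting $n\to\infty$, then $\varepsilon\to0$, and then $k\to\infty$ gives the local quantity bounded by the global one up to terms that vanish.

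The difficulty is that the upper bound from blocks is not ${\bf f}_n$ itself. To bypass this, I would use Theorem \ref{thm:topological-relation} and Theorem \ref{thm:measure-relation} to transfer the problem to the induced sub-additive potential $\mathcal{F}$ wherever this is cleaner.

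For ``$\le$'' in the topological identity, it suffices to show $GP(T,\mathbfcal{F})\le \sup_\U GP(T,\mathbfcal{F},\U)$. I would argue through the variational principles. Theorem \ref{thm:local variational principle} and Theorem \ref{thm:measure-relation} give $\sup_\U GP(T,\mathbfcal{F},\U)=\sup_\U \max_\mu GP_\mu(T,\mathbfcal{F},\U)$. The same pair of results gives $\sup_\U P(T,\mathcal F,\U)=P(T,\mathcal F)$, by the classical fact $\sup_\U h_\mu(T,\U)=h_\mu(T)$ together with \eqref{equation-3} and \eqref{equation-2}. It then remains to prove $GP(T,\mathbfcal{F})\le P(T,\mathcal{F})$ globally. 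I would do this as in the alternative proof of Theorem \ref{thm:topological-relation}: decompose each average pseudo-orbit into long segments that are $\varepsilon$-shadowed by real orbits except on a small-density set of times, and use sub-additivity to split ${\bf f}_n$ along the segments. The count of segmentations grows subexponentially, and the bad positions contribute at most $\|{\bf f}_1\|$ each. Alternatively, I can avoid the shadowing argument by directly proving a global variational principle $GP(T,\mathbfcal F)=\sup_\mu GP_\mu(T,\mathbfcal F)$, mimicking \cite{chung}. Combined with Theorem \ref{thm:topological-relation} for a fine open cover, this closes the topological identity.

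For the measure identity, ``$\ge$'' follows from the same Lebesgue-number covering argument, applied inside the weak$^*$-neighbourhood of $\mu$. For ``$\le$'', take a finite partition-like cover $\U$ of small mesh. A family of $(\rho_n,\varepsilon)$-separated pseudo-orbits with empirical measures near $\mu$ meets each member of $\U^n$ in boundedly many points per coordinate, up to a $\delta n$ exceptional set. This gives at most $e^{o(n)}$ points per cover element, weighted by $\exp$ of the oscillation of ${\bf f}_n$, which is controlled as in the first step.

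I expect the main obstacle to be this oscillation control for merely sub-additive block potentials under the average metric, where coordinates may be far apart on a small-density set. My first attempt is to handle it via the block decomposition with $k$ fixed, bounding bad blocks by $\max\|{\bf f}_k\|$ on a set of density $O(\sqrt{\varepsilon})$.
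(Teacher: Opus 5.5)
There is a genuine gap in the direction $GP(T,\mathbfcal{F})\ge\sup_{\U}GP(T,\mathbfcal{F},\U)$ and in its measure analogue. It sits exactly at the oscillation step you flag, and the block fix does not close it. For a general block sub-additive potential, $\sup_{B_{d_n}({\bf x},\ep)}{\bf f}_n-{\bf f}_n({\bf x})$ need not be $o(n)$, even as $\ep\to0$; matrix-product potentials ${\bf f}_n({\bf x})=\log\|A(x_{n-1})\cdots A(x_0)\|$ already show this. Your estimate $\sup_{B_{d_n}({\bf x},\ep)}{\bf f}_n\le\sum_j{\bf f}_k(\text{block}_j({\bf x}))+\frac{n}{k}\eta+C_k$ therefore only compares the local pressure with the global pressure of the block-sum potential. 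That potential dominates ${\bf f}_n$ by sub-additivity, so recovering $GP(T,\mathbfcal{F})$ as $k\to\infty$ would require an interchange of limits you have no means to justify.

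The missing idea is a greedy choice that makes oscillation irrelevant. Take $2\ep$ below the Lebesgue number of $\U$. Let ${\bf x}_1$ maximize ${\bf f}_n$ on the compact set $X^n_\delta$ (resp.\ $X^n_{\delta,\mu,L}$). Then let ${\bf x}_i$ maximize ${\bf f}_n$ on what remains after removing $\bigcup_{j<i}B_{d_n}({\bf x}_j,\ep)$; by compactness this stops after finitely many steps. The sets $V_i=X^n_\delta\cap(B_{d_n}({\bf x}_i,\ep)\setminus\bigcup_{j<i}B_{d_n}({\bf x}_j,\ep))$ form a Borel cover finer than $\U^n$, and $\sup_{V_i}{\bf f}_n\le{\bf f}_n({\bf x}_i)$ by construction. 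The ${\bf x}_i$ are pairwise at $d_n$-distance at least $\ep$, so $GP_{n,\delta}(T,\mathbfcal{F},\U)\le\sum_i e^{{\bf f}_n({\bf x}_i)}\le GP_{n,\delta}(T,\mathbfcal{F},\ep')$ for every $\ep'<\ep$, with no error term.

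Your fallback through Theorems~\ref{thm:measure-relation} and~\ref{thm:topological-relation} rescues only the topological half. There, $\sup_\U GP(T,\mathbfcal{F},\U)=\sup_\U P(T,\mathcal{F},\U)=P(T,\mathcal{F})\le GP(T,\mathbfcal{F})$, the last step by restricting separated sets to genuine orbits. For the measure identity it fails. Theorem~\ref{thm:measure-relation} gives only the upper bound $GP_\mu(T,\mathbfcal{F},\U)\le h_\mu(T,\U)+\mathcal{F}_*(\mu)$. You would also need the matching lower bound $GP_\mu(T,\mathbfcal{F})\ge h_\mu(T)+\mathcal{F}_*(\mu)$. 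Nothing in your proposal supplies this for non-ergodic $\mu$, and for ergodic $\mu$ it is itself obtained from the identity you are trying to prove.

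The opposite direction needs none of the machinery you invoke (variational principles, shadowing, a global variational principle). Separation here is coordinatewise, i.e.\ in the sup metric $d_n$. So if $\diam(\U)<\ep$, every member of any $\V\succeq\U^n$ contains at most one point of an $(n,\ep)$-separated set $E$. Then $\sum_{{\bf x}\in E}e^{{\bf f}_n({\bf x})}\le\sum_{V\in\V}\sup_{V}e^{{\bf f}_n}$, simply because the supremum over $V$ dominates the value at its point of $E$. This gives $GP_{n,\delta}(T,\mathbfcal{F},\ep)\le GP_{n,\delta}(T,\mathbfcal{F},\U)$ directly, and verbatim with $X^n_{\delta,\mu,L}$. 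The exceptional set and oscillation control in your measure-theoretic sketch are therefore unnecessary. They would only matter for an averaged metric on $X^n$, but under such a metric your claim in the other direction, that $\ep$-balls lie in members of $\U^n$, is false. It is the sup metric that makes both directions work.
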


As a corollary, we derive a global variational principle.
\begin{cor}
	Let $(X,T)$ be a TDS and $\mathbfcal{F}=\{{\bf f}_n\}_{n=1}^\infty$ be a block sub-additive potential. Then for the sub-additive potential $\mathcal{F}$ induced by $\mathbfcal{F}$, we have
\begin{enumerate}
\item
	$$GP(T,\mathbfcal{F})=P(T,\mathcal{F})=\sup_{\mu\in M(X,T)}\big\{h_\mu(T)+  \mathcal{F}_*(\mu)  \big\}.$$
\item For $\mu\in M(X,T)$, we have
 $$GP_\mu(T,\mathbfcal{F})\leq h_\mu(T)+\mathcal{F}_*(\mu).$$
 If $\mu$ is ergodic, then
 $$GP_\mu(T,\mathbfcal{F})=h_\mu(T)+\mathcal{F}_*(\mu).$$
\end{enumerate}
\end{cor}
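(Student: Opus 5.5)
The corollary should follow by combining Theorem \ref{thm:global-local} with the local results (Theorems \ref{thm:local variational principle}, \ref{thm:measure-relation} and \ref{thm:topological-relation}) and the classical facts relating local and global quantities for the induced sub-additive potential $\mathcal{F}$. Those classical facts, available from \cite{zhang09, CDC10, cfh}, are:
$$P(T,\mathcal{F})=\sup_{\U\in\C_X^o}P(T,\mathcal{F},\U) \quad\text{and}\quad h_\mu(T)=\sup_{\U\in\C_X^o}h_\mu(T,\U).$$
Since the sup over $\U$ of $h_\mu(T,\U)$ is attained along covers of small mesh, adding the constant $\mathcal{F}_*(\mu)$ commutes with the supremum.

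For part (1), I would use Theorem \ref{thm:global-local} and then Theorem \ref{thm:topological-relation}:
$$GP(T,\mathbfcal{F})=\sup_{\U}GP(T,\mathbfcal{F},\U)=\sup_{\U}P(T,\mathcal{F},\U).$$
By the local-to-global identity for sub-additive pressure, the right-hand side equals $P(T,\mathcal{F})$. The second equality in (1) is then exactly the Cao–Feng–Huang variational principle \eqref{equation-2}. Alternatively, one can avoid citing the local-to-global identity for $P(T,\mathcal{F},\U)$. Use \eqref{equation-3} to write
$$\sup_{\U}P(T,\mathcal{F},\U)=\sup_{\U}\max_\mu\{h_\mu(T,\U)+\mathcal{F}_*(\mu)\}.$$
Interchanging the two suprema and applying $\sup_\U h_\mu(T,\U)=h_\mu(T)$ gives $\sup_\mu\{h_\mu(T)+\mathcal{F}_*(\mu)\}$ directly.

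For part (2), I would again start from Theorem \ref{thm:global-local}, this time for measures:
$$GP_\mu(T,\mathbfcal{F})=\sup_\U GP_\mu(T,\mathbfcal{F},\U).$$
Theorem \ref{thm:measure-relation} bounds each term by $h_\mu(T,\U)+\mathcal{F}_*(\mu)$, with equality when $\mu$ is ergodic. Taking the supremum over $\U$ then gives $GP_\mu(T,\mathbfcal{F})\le h_\mu(T)+\mathcal{F}_*(\mu)$ in general, and equality in the ergodic case.

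The only delicate point is the identity $\sup_\U h_\mu(T,\U)=h_\mu(T)$ for Romagnoli's local entropy. This is standard: $h_\mu(T,\U)\le h_\mu(T,\alpha)$ whenever $\alpha\succeq\U$, and conversely $h_\mu(T,\U)\ge h_\mu(T,\alpha)-\epsilon$ for a suitable open cover refining a partition $\alpha$ up to small measure. I would cite it from the literature rather than reprove it. The hypothesis $\mathcal{F}_*(\mu)>-\infty$ ensures that no $\infty-\infty$ issues arise when adding $\mathcal{F}_*(\mu)$ to suprema.
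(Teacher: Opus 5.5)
Your proposal is correct and follows essentially the same route as the paper. For (1), the paper combines Theorem \ref{thm:global-local} and Theorem \ref{thm:topological-relation} with the identity $P(T,\mathcal{F})=\sup_{\U}P(T,\mathcal{F},\U)$ from \cite{cfh} and the variational principle \eqref{equation-2}; for (2), it combines Theorem \ref{thm:global-local} and Theorem \ref{thm:measure-relation} with $h_\mu(T)=\sup_{\U}h_\mu(T,\U)$ from \cite{HYZ06}, exactly as you propose. Your alternative route through \eqref{equation-3} and interchanging the suprema is a harmless variant.
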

\begin{proof}
(1) As stated by \cite[Proposition 4.8]{cfh}, $P(T,\mathcal{F})=\sup_{\U\in \C_X^o} P(T,\mathcal{F}, \U)$. Subsequently, the conclusion is derived from Theorem \ref{thm:topological-relation} and Theorem \ref{thm:global-local} .

(2) From \cite[Lemma 2.3]{HYZ06}, we have $h_\mu(T)=\sup_{\U\in \C_X^o} h_\mu(T, \U)$.  By combining Theorem \ref{thm:measure-relation} and Theorem \ref{thm:global-local}, the proof is completed.
\end{proof}

It is worth noting that our main results in this paper extend or partially generalize previous research, including \cite{cfh, CDC10, hy, zhang09, zhang12,  kl11, ly, chung, CL24}, from multiple perspectives.
However, the applicability of our approach and results to broader actions remains uncertain. The primary challenge  lies in decomposing non-additive potentials of average pseudo-orbits, within the framework of sub-additive potentials and local scenarios.

\medskip
The paper is structured as follows. Section \ref{Sect:preliminaries} reviews relevant notations and introduces the concept of pressure for  block sub-additive potentials. Subsequent sections, namely Sections \ref{Sect:proof-thm1} to \ref{Sect:proof-thm4}, present the proofs of Theorems \ref{thm:local variational principle} to \ref{thm:global-local} in sequential order.

\section{Preliminaries}\label{Sect:preliminaries}

Throughout this paper, we denote $\N$, $\Z_+$, $\Z$ and $\R$,  respectively, by  the sets of positive integers,
non-negative integers, integers and real numbers.

\subsection{Product spaces}		 Given a TDS $(X,T)$ with metric $d$,   the infinite product space $X^\N$  is compact and metrizable under the product topology, with metrization provided by the product metric $\tilde{d}$, defined as $$\tilde{d}\big((x_i)_{i=0}^\infty,(y_i)_{i=0}^\infty\big)=\sum_{i=0}^{\infty}\frac{d(x_i,y_i)}{2^i},$$
where $(x_i)_{i=0}^\infty,(y_i)_{i=0}^\infty$ are two points in $X^\N.$
Consider the shift transformation $\sigma: X^\N\to X^\N$, given by $$\sigma\big((x_0,x_1,\ldots,x_i,\ldots)\big)=(x_1,x_2,\ldots,x_{i+1},\ldots)$$
 for any $(x_0,x_1,\ldots,x_i,\ldots)\in X^\N$.
It is clear that the pair $(X^\N, \sigma)$ becomes a TDS.

For fixed $n\in\N$, write  $X^n$ as the $n$-fold product space $(X\times X\times\dots\times X,T\times T\times\dots\times T)$.
The metric $d_n$ on $X^n$ is defined by
$$
d_n((x_0,x_1,\dots,x_{n-1}), (y_0, y_1,\dots,y_{n-1}))=\max_{0\le i\le n-1} d(x_i, y_i),
$$
for any $(x_0,x_1,\dots,x_{n-1}), (y_0, y_1,\dots,y_{n-1})\in X^n$.

\subsection{Induced spaces}
Let $X$ be a compact metric space with metric $d$. The induced hyperspace $2^X$, consisting of  all non-empty closed subsets of $X$, is compact and metrizable under the  Vietoris topology. A compatible metric for this topology is the Hausdorff metric $d_H$, defined as: for $A,B\in 2^X$,
\begin{equation*}
	\begin{split}
		d_H(A,B)&=\max\{\max_{a\in A}d(a,B),\max_{b\in B}d(b,A)\}\\
  & =\inf \{\ep>0:A\subset B_\ep(B)\text{\ and\ }B\subset B_\ep(A)\},
	\end{split}
\end{equation*}
 where $d(b, A)=\min_{a\in A} d(b,a)$ for each $b\in B$, and
$B_\ep(A)=\cup_{a\in A} B_d(a,\ep)$.

\medskip
Let $M(X)$ be the set of all Borel probability measures on $X.$
The weak$^*$-topology	on $M(X)$ is the weakest topology making each of the  maps $\mu\mapsto \int f \text{d}\mu$ continuous for every $f\in C(X).$
 It is known that $M(X)$ is compact and metrizable under weak$^*$-topology.

 We say that  a function $f\in C(X)$ is  {\it Lipschitz} if there exists a constant
$L$ such that $|f(x)-f(y)|\leq L\ d(x,y)$ for all $x,y\in X.$ Set
 $$p_L(f)=\sup_{x\neq y}\frac{|f(x)-f(y)|}{d(x,y)}.$$
It is clear that $f$ is a Lipschitz function if and only if $p_L(f)< \infty$.
 For $\mu,\nu\in M(X),$ define
$$D(\mu,\nu)=\sup_{f\in C(X),p_L(f)\leq 1}\Big|\int_X f \text{d}\mu-\int_X f \text{d}\nu\Big|.$$
Then $D$	is a metric  and  compatible with
the weak*-topology on $M(X)$(see, e.g., \cite[Corollary 21.2.4]{Garling07}).
For $x\in X$, define $\delta_x\in M(X)$ as the Dirac measure supported on $\{x\}$.
It is not hard to check that
$$D(\delta_x,\delta_y)\leq d(x,y),\ \forall\  x, y\in X.$$

Let $(X,T)$ be a TDS and $\mu\in M(X).$ Define the induced map,  still denote by $T$, from  $M(X)$ to itself, as  $T\mu=\mu\circ T^{-1}$ for every $\mu\in M(X)$.
We say that $\mu\in M(X)$ is  {\it $T$-invariant} if $T\mu=\mu.$  Denote by $M(X,T)$  the collection of  all $T$-invariant
Borel probability measures on $X$.  A $T$-invariant $\mu\in M(X,T)$ is called  {\it ergodic} if, for each Borel  subset $A\subset X$ satisfying $T^{-1}A=A$, we have either $\mu(A)=0$ or $\mu(A)=1$.

\subsection{Covers}	
Let $(X,T)$ be a TDS and $\B(X)$ be the collection of all Borel subsets of $X$.  For a subset $K\subset X$, let $\B(K)$ denote the restriction of $\B(X)$ to $K$.
A  {\it cover} of $K$ is referred to  a finite sub-family of $\B(K)$, whose union equals $K$.
An {\it open cover} of $K$ consists of open sets in $K$ that together cover $K$.
A {\it partition} of $K$ is a cover of $K$  where the elements  are pairwise disjoint.
Denote by $\mathcal{C}_K$,  $\mathcal{C}^o_K$ and $\mathcal{P}_K$, respectively,  the set of all  covers,   open covers and  partitions of $K$.

Let $\U,\V\in \mathcal{C}_X$. We say that $\V$ is {\it finer} than $\U$, denoted as $\V\succeq\U$,  if  each element of
	$\V$ is contained in some element of $\U.$
Define  the {\it join} of $\U$ and $\V$ by
	$$\U\vee\V=\{U\cap V:U\in\U,V\in\V\}.$$
For $n\in\N$, simply write
	$$\U^{n-1}_0=\bigvee_{j=0}^{n-1}T^{-j}\U.$$

Given $\U\in \mathcal{C}_X$  and $K\subset X$, define $\U|_{K} =\{U\cap K:U\in\U\}$. Then $\U|_{K}\in \mathcal{C}_K.$
If  in addition $\U\in \mathcal{C}^o_X$,  then $\U|_{K}\in \mathcal{C}^o_K.$
{\bf For $\V\in \mathcal{C}_K$ and $\U\in \mathcal{C}_X$, when the context is clear, we simply reuse $\V\succeq\U$ to denote $\V\succeq\U|_{K}$.}
Define $N(\U,K)$ as the smallest cardinality of sub-families of $\U$ covering $K$, and write $N(\U)$ as $N(\U,X)$ for brevity.
Fix $n\in\N$, define
$$\U^n=\{U_1\times\cdots\times U_n: U_i\in \U, i=1,\ldots,n\}.$$
Then $\U^n\in \mathcal{C}_{X^n}.$

\subsection{Local entropies}
Let $(X,T)$ be a TDS and $\mu\in M(X,T)$. For $\a\in \mathcal{P}(X)$, define
$$H_\mu(\a)=-\sum_{A\in\a}\mu(A)\log\mu(A).$$
The classical {\it measure-theoretic entropy of $(X,T)$ with respect to $\a$} is given by
$$h_{\mu}(T,\a)=\lim_{n\to\infty}\frac{1}{n}H_{\mu}(\a_0^{n-1}).$$
Fix $\U\in \mathcal{C}^o_X.$ According to Romagnoli \cite{R03}, the {\it measure-theoretic entropies relative to $\U$}, can be defined as follows:
$$H_\mu(\U)=\inf_{\a\succeq \U,\a\in \mathcal{P}_X}H_{\mu}(\a),$$
$$h_{\mu}(T,\U)=\lim_{n\to\infty}\frac{1}{n}H_{\mu}(\U_0^{n-1}),$$
and
$$h_{\mu}^+(T,\U)=\inf_{\a\succeq \U,\a\in \mathcal{P}_X}h_{\mu}(T,\a).$$

\subsection{Topological pressure with sub-additive potentials}
Let $(X,T)$ be a TDS and $\mathcal{F}=\{f_n\}_{n=1}^\infty$ be a sub-additive potential on $X$.
Given $n\in\N$ and $\ep>0$,  we say that a set $E\subset X$ is an {\it $(n,\ep)$-separated subset} of $X$ with
respect to $T$ if
$$\max_{0\leq i\leq n-1} d(T^
ix, T^iy)>\ep$$
 for any two different points $x,y\in E.$

According to \cite{cfh}, the {\it topological pressure for $\mathcal{F}$} can be stated as follows:
$$P_{n}(T,\mathcal{F},\ep)=\sup\{\sum_{x\in E}\text{e}^{f_n(x)}: E \text{\ is\ an\ }(n,\ep)\text{-separated\ subset\ of } X\},$$	
$$P(T,\mathcal{F},\ep)=\varlimsup\limits_{n\to\infty}\frac{1}{n}\log  P_{n}(T,\mathcal{F},\ep),$$
and
$$P(T,\mathcal{F})=\lim_{\ep\to 0}P(T,\mathcal{F},\ep).$$

\medskip
Fix $\U\in \mathcal{C}^o_X$. Based on the work  \cite{ly,CDC10},  the {\it topological pressure for $\mathcal{F}$ relative to $\U$} can be defined in the following manner:
$$P_{n} (T,\mathcal{F},\U)= \inf\{\sum_{V\in \V}\sup_{x\in V}\text{e}^{f_n(x)}:\V\in \mathcal{C}_{X},\V\succeq\U_0^{n-1}\},$$	
and
$$P (T,\mathcal{F},\U)=\lim\limits_{n\to\infty}\frac{1}{n}\log P_{n} (T,\mathcal{F},\U).$$


\subsection{Average pseudo-orbits}\label{subsect:average_pseudo_orbit}

Let $(X,T)$ be a TDS. Given $n\in\N$ and $ \delta>0,$ a tuple $(x_0,\dots,x_{n-1})\in X^n$ is said to be an {\it $(n,\delta)$-pseudo-orbit} if
$$
d(Tx_i,x_{i+1})\le\delta,\ \forall i=0, 1,\dots, n-2;
$$
and be an {\it $(n,\delta)$-average-pseudo-orbit} if
$$
\frac{1}{n-1}\sum_{i=0}^{n-2}d(Tx_i,x_{i+1})\le\delta.
$$
Let $X^n_{\delta}$ denote the space of all $(n,\delta)$-average-pseudo-orbits.

\begin{rem}\label{rem:average-pseudo-orbit}
(1) Set $$E=\{0\le i\le n-2: d(Tx_{i}, x_{i+1})\le \sqrt{\delta}\}.$$
By Markov's inequality, if $(x_0,\ldots,x_{n-1})\in X^n_{\delta}$,
then the cardinality $|E|\ge (n-1)(1-\sqrt{\delta})$. Conversely, for any $(x_0,\ldots,x_{n-1})\in X^n$ satisfying  $|E|\ge (n-1)(1-\sqrt{\delta})$,   we have
$$\frac{1}{n-1}\sum_{i=0}^{n-2}d(Tx_i,x_{i+1})\le (\diam(X)+1)\sqrt{\delta},$$
showing that $(x_0,\ldots,x_{n-1})\in X^n_{(\diam(X)+1)\sqrt{\delta}}$. Here $\diam(\cdot)$ means the diameter of given set.

(2) It is of classical interest to study
periodic average pseudo-orbits \cite{kl11, Bowen18}. A tuple $(x_0,\dots,x_{n-1})\in X^n$ is called an {\it $(n,\delta)$-periodic-average-pseudo-orbit} if
$$
\frac{1}{n}\Big(\sum_{i=0}^{n-2}d(Tx_i,x_{i+1})+d(Tx_{n-1},x_0)\Big)\le \delta.
$$
Note that  periodic average pseudo-orbits and  average pseudo-orbits share close dynamical similarities. Specifically, every $(n,\delta)$-periodic-average-pseudo-orbit is an $(n,\frac{n}{n-1}\delta)$-average pseudo-orbit, and conversely, every $(n,\delta)$-average-pseudo-orbit is an $(n,\delta+\frac{\diam(X)}{n})$-periodic-average-pseudo-orbit.
Therefore, for the sake of brevity and clarity, this paper primarily focuses on average pseudo-orbits.
\end{rem}

\subsection{Topological pressure for block sub-additive potentials}\label{subsect:topological-gsp}
When ambiguity is absent, we write ${\bf x}=(x_0,x_1,\ldots,x_{n-1})\in X^n$  for convenience.

Let $(X,T)$ be a TDS, $\U\in \mathcal{C}^o_X$ and $\mathbfcal{F}=\{{\bf f}_n\}_{n=1}^\infty$ be a block sub-additive potential.
 For $n\in\N$ and $\delta>0$, put
\begin{equation}\label{equation-topo}
GP_{n,\delta} (T,\mathbfcal{F},\U)= \inf\{\sum_{V\in \V}\sup_{{\bf x}\in V}\text{e}^{{\bf f}_n({\bf x})}:\V\in \mathcal{C}_{X^n_\delta},\V\succeq\U^n\},
\end{equation}
and
$$GP_\delta (T,\mathbfcal{F},\U)=\varlimsup\limits_{n\to\infty}\frac{1}{n}\log GP_{n,\delta} (T,\mathbfcal{F},\U).$$
The {\it topological pressure for $\mathbfcal{F}$ relative to $\U$} is then given by
$$GP (T,\mathbfcal{F},\U)=\inf_{\delta>0}GP_\delta (T,\mathbfcal{F},\U).$$	

\begin{rem}\label{rem:topo-GSP}
\eqref{equation-topo} is equivalent to
\begin{equation}\label{equation-topo-extra}
GP_{n,\delta} (T,\mathbfcal{F},\U)= \inf\{\sum_{B\in \beta}\sup_{{\bf x}\in B}\text{e}^{{\bf f}_n({\bf x})}:\beta\in \mathcal{P}_{X^n_\delta},\beta\succeq\U^n\}.
\end{equation}
 Indeed, each $\V=\{V_1,\dots,V_k\}\in \mathcal{C}_{X^n_\delta}$ with $\V\succeq\U^n$ generates a partition $\beta=\{B_i:1\le i\le k\}$ by $B_i= V_i\setminus (\cup_{j=1}^{i-1} V_j) $, $1\le i\le k$. It is clear that $\beta\succeq\V\succeq\U^n$ and
$$
\sum_{i=1}^k\sup_{{\bf x}\in V_i}\text{e}^{{\bf f}_n({\bf x})}\ge \sum_{i=1}^k\sup_{{\bf x}\in B_i}\text{e}^{{\bf f}_n({\bf x})}.
$$
Taking infimum over $\V$ gives \eqref{equation-topo-extra}.
\end{rem}

Analogously, we can introduce a global version of topological pressure for $\mathbfcal{F}$ via separated sets.
For $n\in\N$ and $\delta>0$,
a subset $E\subset X^n$   is said to be an {\it $(n,\ep)$-separated set}  if for each distinct points  $(x_i)^{n-1}_{i=0}\neq (y_i)^{n-1}_{i=0}\in E$, there exists $0\leq i\leq n-1$ such that $d(x_i,y_i)>\ep$.
Put
$$GP_{n,\delta}(T,\mathbfcal{F},\ep)=\sup\{\sum_{{\bf x}\in E}\text{e}^{{\bf f}_n({\bf x})}: E \text{\ is\ an\ }(n,\ep)\text{-separated\ subset\ of } X_\delta^n\},$$	
and
$$GP(T,\mathbfcal{F},\ep)=\inf_{\delta>0}\varlimsup\limits_{n\to\infty}\frac{1}{n}\log  GP_{n,\delta}(T,\mathbfcal{F},\ep).$$
Then the  {\it topological pressure for $\mathbfcal{F}$} is defined by
$$GP(T,\mathbfcal{F})=\lim_{\ep\to 0}GP(T,\mathbfcal{F},\ep).$$

\subsection{Measure-theoretic pressure for block sub-additive potentials}\label{subsect:measure-gsp}
For a set $Z$,  denote  $\L_{Z}$  by the collection of all finite subsets of $Z$.

Let $(X,T)$ be a TDS and $\mu\in M(X)$.
By Riesz representation theorem, the measure $\mu$ essentially corresponds to a continuous positive functional on  $C(X)$ with norm $1$. From the dynamical viewpoint, the measure $\mu$ describes the statistical distribution of the orbits $\{(x, Tx, T^2x,\dots)|\ x\in X\}$,
at least for certain very long stretches of time.
Applying the spirit to average pseudo-orbits,
for $L\in\L_{C(X)}$, $n\in\N$ and $\delta>0$,
we define that
$$
X^n_L(\delta,\mu)=\Big\{(x_0,\ldots,x_{n-1})\in X^n: \max_{g\in L}\Big|\frac{1}{n}\sum_{i=0}^{n-1}g(x_i)-\int g\ \text{d}\mu\Big|\leq\delta\Big\},
$$
and
$$X^n_{\delta,\mu,L}=X_\delta^n\cap X^n_L(\delta,\mu).$$
Let $\U\in \mathcal{C}^o_X$ and  $\mathbfcal{F}=\{{\bf f}_n\}_{n=1}^\infty$ be a block sub-additive potential.
Define
\begin{equation}\label{equation-measure}
GP_{n,\delta,\mu,L} (T,\mathbfcal{F},\U)=\inf\{\sum_{V\in \V}\sup_{{\bf x}\in V}\text{e}^{{\bf f}_n({\bf x})}:\V\in \mathcal{C}_{X^n_{\delta,\mu,L}},\V\succeq\U^n\},
\end{equation}
with $GP_{n,\delta,\mu,L} (T,\mathbfcal{F},\U)=0$ if $X^n_{\delta,\mu,L}=\emptyset.$
Set
$$GP_{\delta,\mu,L} (T,\mathbfcal{F},\U)=\varlimsup\limits_{n\to\infty}\frac{1}{n}\log GP_{n,\delta,\mu,L} (T,\mathbfcal{F},\U)	$$
with the convention that $\log 0=-\infty.$
The {\it measure-theoretic pressure for $\mathbfcal{F}$ relative to $\U$} is then defined by
		$$GP_\mu (T,\mathbfcal{F},\U)=\inf_{L\in \L_{C(X)}}\inf_{\delta>0}GP_{\delta,\mu,L}(T,\mathbfcal{F},\U). $$		

\begin{rem}\label{rem:measure-pressure-GSP}
(1) Due to the reasoning in Remark \ref{equation-topo-extra}, \eqref{equation-measure} is equivalent to
\begin{equation}\label{eqation-measure-exta}
GP_{n,\delta,\mu,L} (T,\mathbfcal{F},\U)=\inf\{\sum_{B\in \beta}\sup_{{\bf x}\in B}\text{e}^{{\bf f}_n({\bf x})}:\beta\in \mathcal{P}_{X^n_{\delta,\mu,L}},\beta\succeq\U^n\}.
\end{equation}

(2) The definition of measure-theoretic pressure for $\mathbfcal{F}$ relative to $\U$ can be reformulated as follows:
Put
	$$X^n_{\delta,\mu}=\{(x_0,\ldots,x_{n-1})\in X_\delta^n: \frac{1}{n}\sum_{i=0}^{n-1}\delta_{x_i}\in \overline{B_D(\mu,\delta)}\},$$	
	where $\overline{B_D(\mu, \delta)}$ denotes the closure of the open ball $B_D(\mu, \delta)$. Since $M(X)$ is convex, it is evident to see that $\overline{B_D(\mu, \delta)}=\{\nu\in M(X): D(\mu,\nu)\le \delta\}$.
	Define
	$$GP_{n,\delta,\mu} (T,\mathbfcal{F},\U)=\inf\{\sum_{V\in \V}\sup_{{\bf x}\in V}\text{e}^{{\bf f}_n({\bf x})}:\V\in \mathcal{C}_{X^n_{\delta,\mu}},\V\succeq\U^n\},$$
with $GP_{n,\delta,\mu} (T,\mathbfcal{F},\U)=0$ if $X^n_{\delta,\mu}=\emptyset.$
Set
$$GP_{\delta,\mu} (T,\mathbfcal{F},\U)=\varlimsup\limits_{n\to\infty}\frac{1}{n}\log GP_{n,\delta,\mu} (T,\mathbfcal{F},\U)	$$
	with the convention that $\log 0=-\infty.$
Then we have
$$GP_\mu (T,\mathbfcal{F},\U)=\inf_{\delta>0}GP_{\delta,\mu}(T,\mathbfcal{F},\U). $$		

To observe this, simply note that for any given  $\delta>0$ and $L\in \L_{C(X)}$, by weak$^{*}$ continuity, there exists  a $\delta^\prime=\delta^\prime(\delta, L)\in (0,\delta)$ such that $X^n_{\delta^\prime,\mu}\subset X^n_{\delta,\mu, L}$. On the other hand, by Arzel\`a-Ascoli theorem,  the collection of all Lipschitz 1 continuous functions, denoted by $\text{Lip}_1(X)$,  is compact. This implies that for any $\delta>0$, we can select an $L_\delta\in \L_{C(X)}$ that  is $\delta$-dense in $\text{Lip}_1(X)$.
Then,    through trigonometric inequality, it can be derived that $X^n_{\delta,\mu, L_\delta}\subset X^n_{3\delta,\mu}$.
\end{rem}

Similarly, we can define a global measure-theoretic pressure for block sub-additive potentials by using separated sets.

Let $(X,T)$ be a TDS, $\mu\in M(X, T)$ and  $\mathbfcal{F}=\{{\bf f}_n\}_{n=1}^\infty$ be a block sub-additive potential.  Given $L\in\L_{C(X)}$, $n\in\N$ and $\delta>0$, put
 $$
GP_{n,\delta,\mu,L}(T,\mathbfcal{F},\ep)=\sup\{\sum_{{\bf x}\in E}\text{e}^{{\bf f}_n({\bf x})}: E \text{\ is\ an\ }(n,\ep)\text{-separated\ subset\ of } X_{\delta,\mu,L}^n\},
$$
and
$$GP_\mu(T,\mathbfcal{F},\ep)=\inf_{L\in \L_{C(X)}}\inf_{\delta>0}\varlimsup\limits_{n\to\infty}\frac{1}{n}\log  GP_{n,\delta,\mu,L}(T,\mathbfcal{F},\ep).$$
The {\it measure-theoretic pressure for $\mathbfcal{F}$} is define by
$$GP_\mu(T,\mathbfcal{F})=\lim_{\ep\to 0}GP_\mu(T,\mathbfcal{F},\ep).$$

\section{Proof of Theorem \ref{thm:local variational principle}}\label{Sect:proof-thm1}


\begin{customthm}{{\bf Theorem \ref{thm:local variational principle}}}
{\it Let $(X,T)$ be a TDS, $\mathbfcal{F}=\{{\bf f}_n\}_{n=1}^\infty$ be a block sub-additive potential and $\U\in \mathcal{C}_X^o$. Then
	$$ GP(T,\mathbfcal{F},\U)=\max_{\mu\in M(X,T)}GP_\mu(T,\mathbfcal{F},\U).$$}
\end{customthm}

\begin{proof}
By definitions, for any measure $\mu\in M(X,T)$, we have $GP_\mu(T,\mathbfcal{F},\U)\leq GP (T,\mathbfcal{F},\U)$. Therefore,  it suffices to show that there exists a measure $\mu\in M(X,T)$ such that $GP_\mu(T,\mathbfcal{F},\U)$ $ \geq GP(T,\mathbfcal{F},\U)$.

To begin with, we prove a claim as follows.

{\bf Claim:} For any $L\in\L_{C(X)}$ and any $\delta>0$, there exists $\nu\in M(X)$ such that
	$$GP_{\delta,\nu,L} (T,\mathbfcal{F},\U)\geq GP(T,\mathbfcal{F},\U).$$

{\it Proof of Claim}:  Fix $L\in\L_{C(X)}$ and $\delta>0$. Since $M(X)$ is compact under weak$^{*}$-topology,  we can find some $F\in \L_{M(X)}$ such that for any $n\in\N$ and $(x_i)_{i=0}^{n-1}\in X^n$, there exists a $\nu\in F$ such that
$$\max_{g\in L}\Big|\frac{1}{n}\sum_{i=0}^{n-1}g(x_i)-\int g\ \text{d}\nu\Big|\leq\delta.$$
 Hence we have
 $$\bigcup_{\nu\in F}X^n_{\delta,\nu,L}=X^n_\delta.$$
 This implies that for each $n\in\N,$ there exists $\nu_n\in F$ such that
  $$ GP_{n,\delta,\nu_n,L}(T,\mathbfcal{F},\U) \geq\frac{GP_{n,\delta}(T,\mathbfcal{F},\U)}{|F|}.$$
Given the relevant definitions and the finite nature of the set $F$, we can select an
infinite sequence $\{n_i\}_{i=1}^\infty$ such  that
$$ \lim_{i\to\infty}\frac{1}{n_i}\log GP_{n_i,\delta}(T,\mathbfcal{F},\U)=GP_{\delta}(T,\mathbfcal{F},\U),$$
 and $\{\nu_{n_{i}}\}_{i=1}^\infty=\{\nu\}$ for some $\nu\in F$.
It is then derived that $$GP_{\delta,\nu,L}(T,\mathbfcal{F},\U)\geq GP_{\delta}(T,\mathbfcal{F},\U)\geq GP(T,\mathbfcal{F},\U).$$
This completes the proof of the Claim.

\medskip
	Now let $\{g_k\}_{k=1}^\infty$ be a sequence dense in $C(X)$ and  denote $L_m=\{g_k\}_{k=1}^m.$  By Claim, for any $m\in\N,$ there exists $\nu_m\in M(X)$ such that
	$$GP_{\frac{1}{m},\nu_m,L_m}(T,\mathbfcal{F},\U)\geq GP(T,\mathbfcal{F},\U).$$
	Passing to a subsequence if necessary, we assume that $\nu_{m}\to \mu\in M(X).$
	For any $L\in\L_{C(X)}$ and $\delta>0$, when $m$ is sufficiently large, the followings hold:
	\begin{enumerate}
		\item[(a)] $\frac{1}{m}<\frac{\delta}{4},$
		\item[(b)] for every $f\in L,$ there exists $g\in L_m$ such that $||f-g||_\infty<\frac{\delta}{4},$ and
		\item[(c)]   $|\int f\text{d}\nu_m-\int f\text{d}\mu|<\frac{\delta}{4}$ for all $f\in L.$
	\end{enumerate}
Then for any $n\in\N$, using the conditions (a)(b)(c),  it is not hard to verify that
$$
X^n_{\frac{1}{m},\nu_m,L_m}\subset X^n_{\delta,\mu,L}.
$$
Consequently,
$$GP_{\delta,\mu,L}(T,\mathbfcal{F},\U)\geq GP_{\frac{1}{m},\nu_m,L_m}(T,\mathbfcal{F},\U)\geq GP(T,\mathbfcal{F},\U).$$
Since $L\in\L_{C(X)}$ and $\delta>0$ are arbitrary, we have
$$GP_{\mu}(T,\mathbfcal{F},\U)\geq GP(T,\mathbfcal{F},\U).$$
	
 Now we are only left to prove that $\mu$ is $T$-invariant. For any $\ep>0$ and any Lipschitz function $f\in C(X)$ with $p_L(f)\leq 1$, we select $m$ to be sufficiently large such that  $\frac{1}{m}<\frac{\ep}{3}$, and
$$||f-g_1||_\infty<\frac{\ep}{3},\  \ ||f\circ T-g_2||_\infty<\frac{\ep}{3}$$
for some $g_1,g_2\in L_m$.
	By the choice of $\nu_{m},$ we can choose some sequence $\{k_m\}$ such that $X^{k_m}_{\frac{1}{m},\nu_m,L_m}\neq\emptyset.$  Let $(x_i)_{i=0}^{k_m-1}\in X^{k_m}_{\frac{1}{m},\nu_m,L_m}$. We have
$$\Big|\frac{1}{k_m}\sum_{i=0}^{k_m-1}g_1(x_i)-\int g_1\ \text{d}\nu_m\Big|\leq\frac{1}{m}<\frac{\ep}{3},$$
and
$$\Big|\frac{1}{k_m}\sum_{i=0}^{k_m-1}g_2(x_i)-\int g_2\ \text{d}\nu_m\Big|\leq\frac{1}{m}<\frac{\ep}{3}.$$
This implies that
$$\Big|\frac{1}{k_m}\sum_{i=0}^{k_m-1}f(x_i)-\int f \ \text{d}\nu_m\Big|<\ep,$$
 and
 $$\Big|\frac{1}{k_m}\sum_{i=0}^{k_m-1}f(Tx_i)-\int f\circ T\ \text{d}\nu_m\Big|<\ep.$$
Then
	\begin{equation*}
		\begin{split}
			\Big|\int f\ \text{d}\nu_m-\int f\ \text{d}  T\nu_m\Big|
			&\leq\Big|\frac{1}{k_m}\sum_{i=0}^{k_m-1}f(x_i)-	\frac{1}{k_m}\sum_{i=0}^{k_m-1}f(Tx_i)\Big|+2\ep\\
			&\leq \frac{1}{k_m}\sum_{i=0}^{k_m-2}d(Tx_i,x_{i+1})+\frac{2||f||_\infty}{k_m}+2\ep\\
			&\leq \ep+\frac{2||f||_\infty}{k_m}+2\ep.
		\end{split}
	\end{equation*}
	Letting $m\to\infty$ and given the arbitrariness of $\ep$, it follows that for every  Lipschitz function $f$ with $p_L(f)\leq 1$,   we have
$$\int f \ \text{d}\mu=\int f\ \text{d}T\mu.$$
Consequently, $\mu$ is $T$-invariant. This completes the whole proof.
\end{proof}

\section{Proof of Theorem \ref{thm:measure-relation}}\label{Sect:proof-thm2}

For the sake of a clear proof of the Theorem  \ref{thm:measure-relation}, we first address the invertible case and then proceed to the non-invertible scenario.

\subsection{Proof of Theorem \ref{thm:measure-relation}: the  invertible case}
First we assemble and develop a series of necessary lemmas.


	\begin{lem}[\cite{cfh}, Lemma 2.3]\label{subadd}
		Let $(X,T)$ be a TDS,  $\mathcal{F}=\{f_n\}_{n=1}^\infty$ be a sub-additive potential on $X$, and $\{\nu_n\}_{n=1}^\infty$  be a sequence in $M(X)$. Put $\mu_n= \frac{1}{n}\sum_{j=0}^{n-1}{\nu}_n\circ T^{-j}$, and assume that $\mu_{n_i}\to \mu$	
		in $M(X)$ for some subsequence $\{n_i\}_{i=1}^\infty\subset \N$. Then we have
		$$\varlimsup_{i\to \infty}\frac{1}{n_i}	\int  {f}_{n_i}\ \emph{d}{\nu}_{n_i}\leq\mathcal{F}_*(\mu).$$
	\end{lem}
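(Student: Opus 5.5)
This is Lemma 2.3 of \cite{cfh}; I would reproduce its standard proof, which uses only sub-additivity and a division of $\{0,\dots,n-1\}$ into blocks of a fixed length $k$. Fix $k\in\N$. For $0\le j<k$ and $n>2k$, write $n=j+qk+r$ with $0\le r<k$ (so $q=q(j,n)$). Iterating the sub-additive inequality along the decomposition $[0,j)$, then $q$ blocks $[j+lk,j+(l+1)k)$, then a tail of length $r$ gives
$$f_n(x)\le f_j(x)+\sum_{l=0}^{q-1}f_k(T^{j+lk}x)+f_r(T^{j+qk}x),$$
where the terms $f_j$, $f_r$ are absent when $j=0$ or $r=0$.

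Put $C=\max_{1\le i\le k}\|f_i\|_\infty$ and note $f_1\le$ const is not needed: the boundary terms are each bounded by $C$. Summing over $j=0,\dots,k-1$, the block starting points $j+lk$ run over distinct integers in $[0,n-k]$, each at most once. Hence
$$k f_n(x)\le \sum_{i=0}^{n-1}f_k(T^ix)+2kC+kC',$$
where $C'$ absorbs the at most $2k$ indices $i$ near the ends that are not covered by blocks. Concretely, I add and subtract $f_k(T^ix)$ for those indices, which costs at most $2k\|f_k\|_\infty$. So
$$f_n(x)\le \frac1k\sum_{i=0}^{n-1}f_k(T^ix)+C''(k),$$
with $C''(k)$ independent of $n$ and $x$.

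Integrating against $\nu_n$ and dividing by $n$ gives
$$\frac1n\int f_n\,d\nu_n\le \frac1k\int f_k\,d\mu_n+\frac{C''(k)}n,$$
by the definition of $\mu_n$. Along $n_i$, weak$^*$ convergence together with the continuity of $f_k$ yields
$$\varlimsup_i \frac1{n_i}\int f_{n_i}\,d\nu_{n_i}\le \frac1k\int f_k\,d\mu.$$

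The limit measure $\mu$ is $T$-invariant, by the usual estimate $\|T\mu_n-\mu_n\|\le 2/n$. Therefore $\frac1k\int f_k\,d\mu$ is a sub-additive sequence in $k$, and letting $k\to\infty$ gives $\mathcal F_*(\mu)$, which equals its infimum.

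The main obstacle is the careful bookkeeping of the boundary indices in the block decomposition, so that the error constant is uniform in $n$ and $x$. This is routine, and it is where the averaging over the shifts $j$ is essential.
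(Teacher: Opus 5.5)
Your proposal is correct and is essentially the standard block-decomposition argument of Cao--Feng--Huang \cite{cfh} (Lemma 2.3); the paper gives no proof of its own and simply cites that lemma. One cosmetic slip: it is $\int f_k\,d\mu$, not $\frac1k\int f_k\,d\mu$, that is sub-additive in $k$, and Fekete's lemma then gives $\inf_k \frac1k\int f_k\,d\mu=\mathcal F_*(\mu)$, which is exactly what your final step needs.
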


\begin{lem}[\cite{W82}, Lemma 9.9]\label{lw}
	Let $a_1,\ldots,a_k\in \R$, $p_i\geq 0, i=1,\ldots,k$ and $\sum_{i=1}^kp_i=1$. Then
	$$\sum_{i=1}^kp_i(a_i-\log p_i)\leq \log(\sum_{i=1}^k\emph{e}^{a_i}),$$
and equality holds iff $p_i=\frac{\emph{e}^{a_i}}{\sum_{j=1}^k \emph{e}^{a_j}}$ for all $1\le i\le k$.
\end{lem}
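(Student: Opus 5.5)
The plan is to reduce the inequality to Jensen's inequality for the strictly concave function $\log$, after normalizing the weights $\mathrm{e}^{a_i}$ into a probability vector. We use the convention $0\log 0=0$, so indices with $p_i=0$ contribute nothing to the left-hand side.

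Set $Z=\sum_{j=1}^k \mathrm{e}^{a_j}>0$ and $q_i=\mathrm{e}^{a_i}/Z$. Then $q_i>0$ and $\sum_i q_i=1$. Let $S=\{i: p_i>0\}$. Since $a_i=\log q_i+\log Z$ and $\sum_{i\in S}p_i=1$, we get
$$\sum_{i=1}^k p_i(a_i-\log p_i)-\log Z=\sum_{i\in S}p_i\log\frac{q_i}{p_i}.$$
So it suffices to show that the right-hand side is $\le 0$. The weights $(p_i)_{i\in S}$ form a probability vector and $\log$ is concave, so Jensen's inequality gives
$$\sum_{i\in S}p_i\log\frac{q_i}{p_i}\le \log\Big(\sum_{i\in S}p_i\cdot\frac{q_i}{p_i}\Big)=\log\Big(\sum_{i\in S}q_i\Big)\le \log 1=0.$$
This proves the inequality.

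For the equality case, first suppose $p_i=q_i$ for all $i$. Then every term $\log(q_i/p_i)$ vanishes, so equality holds trivially. Conversely, suppose equality holds. Then both inequalities in the chain above are equalities.
\begin{itemize}
\item Equality in the second step forces $\sum_{i\in S}q_i=1$. Since every $q_i$ is strictly positive, this means $S=\{1,\dots,k\}$.
\item Equality in Jensen's inequality for the strictly concave $\log$ forces $q_i/p_i$ to be constant on $S$, say equal to $c$. Summing $q_i=cp_i$ over $i$ gives $c=1$.
\end{itemize}
Hence $p_i=q_i=\mathrm{e}^{a_i}/\sum_j\mathrm{e}^{a_j}$ for all $i$.

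The only point needing care is the treatment of zero weights $p_i$, and both the convention $0\log 0=0$ and the restriction to the support $S$ handle it. No step is a genuine obstacle.
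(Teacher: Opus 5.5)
Your proof is correct. The reduction to $\sum_{i\in S}p_i\log(q_i/p_i)\le 0$ is sound, and so is the Jensen step. The two-part equality analysis also works: $\sum_{i\in S}q_i=1$ forces full support, and strict concavity then forces $q_i/p_i\equiv 1$.

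The paper gives no proof of its own; it quotes the lemma from Walters' book. The argument there is also Jensen's inequality, but arranged the other way round. With $M=\sum_j\mathrm{e}^{a_j}$, one applies Jensen to the strictly convex function $\phi(x)=x\log x$ (with $\phi(0)=0$), using the weights $\alpha_i=\mathrm{e}^{a_i}/M$ and the points $x_i=p_iM\mathrm{e}^{-a_i}$. This gives $\sum_i p_i(\log p_i+\log M-a_i)=\sum_i\alpha_i\phi(x_i)\ge\phi\bigl(\sum_i\alpha_ix_i\bigr)=\phi(1)=0$.

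That arrangement has three advantages. The weights are all strictly positive and $\phi$ is continuous on $[0,\infty)$, so indices with $p_i=0$ need no special treatment. The intermediate inequality $\sum_{i\in S}q_i\le 1$ never appears. The equality case comes out in one step: all $x_i$ are equal, hence $p_i=c\,\mathrm{e}^{a_i}$, and $c=1/M$.

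Your version, with weights $p_i$ and the concave $\log$, is the usual Gibbs-inequality (nonnegativity of relative entropy) argument. It costs the support bookkeeping and a two-stage equality analysis, and you handled both correctly.
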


	\begin{lem}[\cite{HYZ06}]\label{lb}
		Let $(X,T)$ be an invertible TDS, $\mu\in M(X,T)$ and  $\U\in \mathcal{C}^o_X$. Then $$h_{\mu}^+(T,\U)=h_{\mu}(T,\U).$$
	\end{lem}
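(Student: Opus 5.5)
The inequality $h_\mu(T,\U)\le h_\mu^+(T,\U)$ is immediate, and I would prove it first. If $\a\in\mathcal P_X$ satisfies $\a\succeq\U$, then $\a_0^{n-1}\succeq\U_0^{n-1}$ for every $n$. Hence $H_\mu(\U_0^{n-1})\le H_\mu(\a_0^{n-1})$. Dividing by $n$, letting $n\to\infty$ and taking the infimum over $\a$ gives the inequality. All the work is therefore in the reverse inequality $h_\mu^+(T,\U)\le h_\mu(T,\U)$: for every $\ep>0$ we must produce a partition $\a\succeq\U$ with $h_\mu(T,\a)\le h_\mu(T,\U)+\ep$.

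I would follow the strategy of Huang--Ye--Zhang and treat the ergodic case first.

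\emph{Choice of scale.} Fix $\ep>0$ and choose $N$ so large that $\frac1N H_\mu(\U_0^{N-1})\le h_\mu(T,\U)+\ep$. This uses that the limit defining $h_\mu(T,\U)$ is an infimum, by sub-additivity of $n\mapsto H_\mu(\U_0^{n-1})$. Choose a partition $\beta\succeq\U_0^{N-1}$ with $H_\mu(\beta)\le H_\mu(\U_0^{N-1})+\ep$. Each atom $B\in\beta$ carries a label $(U_0,\dots,U_{N-1})\in\U^N$ with $B\subset\bigcap_{j}T^{-j}U_j$.

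\emph{Rokhlin tower.} If $\mu$ is ergodic and non-atomic (so aperiodic), invertibility allows us to apply the Rokhlin lemma. This gives a base $F$ such that $F,TF,\dots,T^{M-1}F$ are pairwise disjoint, with $M$ a large multiple of $N$, and $\mu$ of the union exceeds $1-\ep$.

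\emph{Building $\a$.} On the base $F$, refine by $\bigvee_{k<M/N}T^{-kN}\beta$. A point $x\in T^iF$, written $i=kN+j$ with $0\le j<N$, is assigned to the set of $\U$ given by the $j$-th coordinate of the $\beta$-label of $T^{-j}x\in T^{kN}F$. Points outside the tower are assigned by a fixed partition refining $\U$. The resulting partition $\a$ refines $\U$ by construction.

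\emph{Entropy estimate.} The $\a$-name of a point along a tower column is determined by its $\beta$-names along $T^{N}$, plus the entry time into $F$ and a small error from the part outside the tower. Standard counting then gives
$$h_\mu(T,\a)\le \frac1N H_\mu(\beta)+O(\ep)+O\Big(\frac{\log M}{M}\Big).$$
This bounds $h_\mu^+(T,\U)$ by $h_\mu(T,\U)+O(\ep)$. If $\mu$ is periodic (finitely supported), the statement can be checked directly, since both sides are $0$.

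\emph{General invariant measures.} For non-ergodic $\mu$, write the ergodic decomposition $\mu=\int\mu_x\,d\mu(x)$. The key ingredient here is the known identity $h_\mu(T,\U)=\int h_{\mu_x}(T,\U)\,d\mu(x)$ from the local entropy theory, which rests on upper semicontinuity in $\mu$ and an affinity argument. One then glues partitions: the ergodic components are split into finitely many measurable classes using the $T$-invariant $\sigma$-algebra, a near-optimal partition is chosen for each class, and these are patched together. Since the classes are invariant, the entropy of the patched partition is the average of the entropies, up to $\ep$.

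I expect the main obstacle to be this gluing step, because $\a\mapsto h_\mu(T,\a)$ is not continuous and the near-optimal partitions must be chosen measurably in $x$. I would first try to avoid measurable selection altogether. The idea is to carry out the Rokhlin construction once, for the aperiodic part of $\mu$ with respect to a single base, and to choose $\beta$ on each piece of a finite invariant partition approximating the ergodic decomposition. This reduces everything to finitely many choices.
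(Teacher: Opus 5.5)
The paper gives no proof of this lemma; it is quoted from Huang--Ye--Zhang, so your argument has to stand on its own. The easy inequality, the choice of $N$ and $\beta$, and the construction of $\alpha\succeq\mathcal U$ from the tower are all correct. The gap is the step ``standard counting then gives $h_\mu(T,\alpha)\le\frac1N H_\mu(\beta)+O(\epsilon)+O(\frac{\log M}{M})$''.

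To make it precise, let $\mathcal T=\{F,TF,\dots,T^{M-1}F,R\}$, with $R$ the complement of the tower. Let $\xi$ equal $\beta$ on the levels $T^{kN}F$, a fixed $\gamma\in\mathcal U^*$ on $R$, and be trivial elsewhere. Invertibility gives $\bigvee_{j=0}^{N-1}T^{j}(\xi\vee\mathcal T)\succeq\alpha$, hence
\[
h_\mu(T,\alpha)\le h_\mu(T,\xi\vee\mathcal T)\le h_\mu(T,\mathcal T)+H_\mu(\xi\mid\mathcal T).
\]
Here $h_\mu(T,\mathcal T)=O(\frac{\log M}{M})$, because $\mathcal T$-names are determined by the initial level and the $M$-separated visit times to $F$. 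But the main part of $H_\mu(\xi\mid\mathcal T)$ is $\mu(F)\sum_{k<M/N}H_{\mu_{T^{kN}F}}(\beta)$, where $\mu_E$ denotes the normalized restriction of $\mu$ to $E$. This depends on how $\beta$ is distributed on those particular levels, not on $H_\mu(\beta)$. A count of typical names along $T^N$ has the same defect, since those names are read off at points of $F$.

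Ergodicity of $\mu$ does not help. Suppose $T$ cyclically permutes sets $X_0,\dots,X_{N-1}$. Then a legitimate Rokhlin base can sit inside $X_0$, so all anchoring levels lie in $X_0$. The bound is then of size $\frac1N H_{\mu_{X_0}}(\beta)$, which can exceed $\frac1N H_\mu(\beta)$ by a factor close to $N$; in general one only has $H_{\mu_{X_0}}(\beta)\le N H_\mu(\beta)$.

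The missing idea is to control the anchoring. One option is Rokhlin's lemma with a base independent of $\bigvee_{i<M}T^{-i}\beta$. The other is to average over offsets: for $j_0\in\{0,\dots,N-1\}$, start the $N$-blocks at levels $s\equiv j_0 \pmod N$ with $s\le M-N$, and use $\gamma$ on the fewer than $2N$ leftover levels. Summed over $j_0$, the main terms are at most $\sum_{i<M}\mu(T^iF)H_{\mu_{T^iF}}(\beta)\le H_\mu(\beta\mid\mathcal T)\le H_\mu(\beta)$. So some $j_0$ gives $h_\mu(T,\alpha)\le\frac1N H_\mu(\beta)+O(\frac{\log M}{M})+(\frac{2N}{M}+\mu(R))\log|\mathcal U|$.

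This argument never uses ergodicity, and Rokhlin's lemma needs only aperiodicity. So you can skip the ergodic decomposition and the nontrivial identity $h_\mu(T,\mathcal U)=\int h_{\mu_x}(T,\mathcal U)\,d\mu(x)$ altogether:
\begin{enumerate}
\item Run the construction for $\mu_A$, the normalized restriction of $\mu$ to the aperiodic part $A$.
\item Put $\gamma$ on the periodic part $P$, where every partition has zero $\mu_P$-entropy.
\item Combine affinity of $\nu\mapsto h_\nu(T,\alpha)$ with concavity of $\nu\mapsto H_\nu(\mathcal U_0^{n-1})$ to get $h_\mu(T,\alpha)\le \mu(A)\,h_{\mu_A}(T,\mathcal U)+\epsilon\le h_\mu(T,\mathcal U)+\epsilon$.
\end{enumerate}

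Your fallback with finitely many invariant pieces was inconsistent anyway. If the tower estimate needed ergodicity, it could not be applied to non-ergodic pieces; if it does not, the pieces are unnecessary.

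The measurable-selection worry is also milder than you think. For fixed $\mu$, the map $\alpha\mapsto h_\mu(T,\alpha)$ is continuous in the partition metric on partitions with $|\mathcal U|$ atoms. Moreover, the construction in the proof of Lemma~\ref{count}, for any fixed sequence $r_n\to0$, yields a countable subfamily of $\mathcal U^*$ that is dense for every measure simultaneously.
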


Let $\U=\{U_1,\ldots,U_m\}\in  \mathcal{C}_X^o$. Define
$$\U^*= \{\a\in \mathcal{P}_X:\a=\{A_1,\ldots,A_{m}\}, A_i\subset U_i, i=1,\ldots, m \},$$
where $A_i$ can be empty for some $1\le i\le m$.

	\begin{lem}[\cite{hmry}, Lemma 2]\label{la}
		Let $(X,T)$ be a TDS, $\U=\{U_1,\ldots,U_m\}\in \mathcal{C}^o_X$  and $G:\mathcal{P}_X\to \R$ be monotone in the sense that $G(\a)\geq G(\beta)$ whenever $\a\succeq\beta.$ Then we have
		$$\inf_{\a\in\mathcal{P}_X,\a\succeq \U}G(\a)=\inf_{\a\in \U^*}G(\a).$$
	\end{lem}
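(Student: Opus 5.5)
The plan is to prove the two inequalities separately. The comparison between the two families of partitions is direct, and monotonicity of $G$ does all the work.

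\emph{Easy direction.} Every $\a=\{A_1,\ldots,A_m\}\in\U^*$ is a Borel partition of $X$ with $A_i\subset U_i$ for each $i$. Hence $\a\succeq\U$, so $\U^*\subset\{\a\in\mathcal{P}_X:\a\succeq\U\}$. Taking infima over the smaller family gives
$$\inf_{\a\in\U^*}G(\a)\ \ge\ \inf_{\a\in\mathcal{P}_X,\a\succeq\U}G(\a).$$

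\emph{Reverse direction.} Fix any $\a\in\mathcal{P}_X$ with $\a\succeq\U$. I will coarsen $\a$ into an element of $\U^*$.
\begin{enumerate}
\item For each $A\in\a$ there is some $U_i$ containing $A$. Let $i(A)$ be the smallest such index; this makes the assignment well defined.
\item For $1\le i\le m$, put $B_i=\bigcup\{A\in\a: i(A)=i\}$, with $B_i=\emptyset$ if no atom is assigned to $i$.
\end{enumerate}
Then $\beta:=\{B_1,\ldots,B_m\}$ has the following properties.
\begin{enumerate}
\item Each $B_i$ is a finite union of Borel sets, hence Borel.
\item The $B_i$ are pairwise disjoint, because each atom of $\a$ is assigned to exactly one index.
\item The $B_i$ cover $X$, because $\a$ does.
\item $B_i\subset U_i$ for each $i$.
\end{enumerate}
So $\beta\in\U^*$. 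Moreover every atom $A$ of $\a$ lies in $B_{i(A)}$, so $\a\succeq\beta$. Monotonicity of $G$ then yields
$$G(\a)\ge G(\beta)\ge\inf_{\a'\in\U^*}G(\a').$$
Taking the infimum over all such $\a$ gives the reverse inequality.

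There is no real obstacle. The only point needing care is the bookkeeping of empty atoms. Elements of $\U^*$ are allowed to contain empty sets, and the relation $\a\succeq\beta$ is insensitive to them. The definition of $\U^*$ explicitly allows empty $A_i$, so $G$ is defined on $\beta$ as written, and no further care is needed.
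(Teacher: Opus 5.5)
Your proof is correct: the inclusion $\U^*\subset\{\a\in\mathcal{P}_X:\a\succeq\U\}$ gives one inequality. For the other, assigning each atom of an arbitrary $\a\succeq\U$ to the first $U_i$ containing it yields $\beta\in\U^*$ with $\a\succeq\beta$, and monotonicity of $G$ finishes. The paper does not prove this lemma itself but quotes it from the cited reference, and your coarsening argument is the standard proof of that statement, so your route agrees with it.
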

	
Given a set $A\subset X$ and $\a\in \mathcal{P}_X$, write  $\partial A$ for  the boundary of $A$, and $\partial\a=\cup_{A\in\a}\partial A$.
 For $\mu\in M(X)$ and two ordered partitions $\a=\{A_1,\dots,A_m\}, \beta=\{B_1,\dots,B_m\}\in \mathcal{P}_X$, set $\mu(\a\Delta\beta)=\sum_{i=1}^m\mu(A_i\Delta B_i)$.

\begin{lem}\label{count}
		Let $(X,T)$ be a TDS, $\mu\in M(X,T)$ and $\U=\{U_1,\ldots,U_\text{m}\}\in \mathcal{C}_X^o$. Then we can find a countable family $\{\a_l\}_{l=1}^\infty$ in $\U^*$ such that $\mu(\partial\a_l)=0$ for each $l\in\N$, and
$$h_{\mu}^+(T,\U)=\inf_{l\in\N} h_{\mu}(T,\a_l).$$
\end{lem}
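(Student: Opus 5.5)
The plan is to reduce to a countable family inside $\U^*$ via Lemma \ref{la}, and then to perturb each partition so that its boundary becomes $\mu$-null.

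First, the map $\a\mapsto h_\mu(T,\a)$ is monotone on $\mathcal{P}_X$: if $\a\succeq\beta$ then $h_\mu(T,\a)\ge h_\mu(T,\beta)$. Lemma \ref{la} therefore gives
$$h_\mu^+(T,\U)=\inf_{\a\in\U^*}h_\mu(T,\a).$$
Choose a sequence $\beta_l\in\U^*$ with $h_\mu(T,\beta_l)\to h_\mu^+(T,\U)$. It then suffices to produce, for each $l$, some $\a_l\in\U^*$ with $\mu(\partial\a_l)=0$ and $h_\mu(T,\a_l)\le h_\mu(T,\beta_l)+1/l$. Indeed, since every $\a_l\succeq\U$, we would get
$$h_\mu^+(T,\U)\le\inf_l h_\mu(T,\a_l)\le h_\mu^+(T,\U).$$

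For the approximation step, fix $\beta=\{B_1,\dots,B_m\}\in\U^*$ with $B_i\subset U_i$. We use the standard continuity estimate for ordered partitions with the same number of atoms,
$$|h_\mu(T,\a)-h_\mu(T,\beta)|\le H_\mu(\a|\beta)+H_\mu(\beta|\a),$$
together with the fact that the right-hand side tends to $0$ as $\mu(\a\Delta\beta)\to0$, uniformly once $m$ is fixed. So we need $\a\in\U^*$ with $\mu$-null boundaries and $\mu(\a\Delta\beta)$ arbitrarily small. This is the main obstacle: the approximating partition must still satisfy $A_i\subset U_i$ while having boundaries of measure zero.

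To build it, take a closed set $K_i\subset B_i$ with $\mu(B_i\setminus K_i)<\eta$; this is possible by regularity. Since $K_i$ is compact and contained in the open set $U_i$, for all but countably many $r>0$ the open set
$$W_i=\{x:d(x,K_i)<r\}$$
satisfies $\overline{W_i}\subset U_i$ and $\mu(\partial W_i)=0$. The null-boundary property holds because the sets $\{x: d(x,K_i)=r\}$ are disjoint for distinct $r$. We then need the $W_i$ to still cover $X$. To arrange this, first shrink the cover: choose closed sets $F_i\subset U_i$ with $\bigcup_i F_i=X$. Then take $K_i$ inside $B_i\cap F_i$ and replace $W_i$ by
$$W_i'=\{x:d(x,F_i)<r_i\},$$
where each $r_i$ is chosen small enough that $\overline{W_i'}\subset U_i$ and $\mu(\partial W_i')=0$.

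Now define
$$A_1=W_1',\qquad A_i=W_i'\setminus\bigcup_{j<i}W_j'.$$
The boundary of each $A_i$ lies in $\bigcup_j\partial W_j'$, which is $\mu$-null, and $A_i\subset U_i$, so $\a\in\U^*$. The remaining difficulty is controlling $\mu(\a\Delta\beta)$. To handle it, the $F_i$ should be chosen adapted to $\beta$ rather than arbitrary: take closed $F_i\subset B_i$ with $\mu(B_i\setminus F_i)<\eta$ and $F_i$ pairwise disjoint, which is automatic since the $B_i$ are disjoint. These $F_i$ need not cover $X$, so we adjoin the leftover set. Let $\overline{W}=\bigcup_i W_i'$, with $r_i$ small enough that the $W_i'$ are pairwise disjoint, and note that $\mu(X\setminus\overline{W})<m\eta$. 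The remainder $X\setminus\overline{W}$ must be distributed among the $U_i$ while keeping boundaries null. For this, apply the same construction to the closed-cover shrinking $\{G_i\}$ of $\U$ to obtain null-boundary sets $V_i\subset U_i$ covering $X$, and set
$$A_i=W_i'\cup\Big(V_i\setminus\big(\overline{W}\cup\textstyle\bigcup_{j<i}V_j\big)\Big).$$
Each $A_i$ then differs from $B_i$ only on a set of measure at most about $2m\eta$, its boundary is $\mu$-null, and $A_i\subset U_i$. Letting $\eta\to0$ completes the approximation, and with it the proof.
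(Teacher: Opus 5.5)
Your argument is correct, but it is organized differently from the paper's.

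The paper fixes one countable family in advance. It picks a dense sequence $\{x_k\}$ and radii $r_n\downarrow 0$ with $\mu(\partial B_{r_n}(x_k))=0$. It then takes all partitions in $\U^*$ obtained by disjointifying covers $\{V_1,\dots,V_m\}$ whose members are finite unions of such balls. Finally it shows this family is dense in $\U^*$ for the pseudometric $\mu(\a\Delta\beta)$, as follows:
\begin{itemize}
\item take compact sets $K_i\subset B_i$;
\item form the open cover $O_i=U_i\cap(\bigcup_{j\ne i}K_j)^c$;
\item choose the radius below half its Lebesgue number, and let $V_i$ be the union of the small balls lying in $O_i$;
\item this gives $K_i\subset A_i\subset O_i$.
\end{itemize}

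You instead index the family by a minimizing sequence $\beta_l$ and approximate each $\beta_l$ separately. You get null boundaries from level sets of distance functions, to closed inner approximations $F_i\subset B_i$ and to a closed shrinking of $\U$, rather than from a countable base of balls. You also state explicitly the entropy continuity estimate through $H_\mu(\a|\beta)+H_\mu(\beta|\a)$, which the paper leaves implicit in ``it suffices to show''.

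What each route buys: the paper's family depends only on $\mu$ and $\U$ and is dense in $\U^*$. It therefore works at once for every monotone functional that is continuous in $\mu(\cdot\Delta\cdot)$. Your family depends on $T$ through the minimizing sequence. That is harmless, since the lemma only asserts existence, and it avoids the Lebesgue-number bookkeeping at the cost of a two-layer construction: disjoint neighbourhoods of the $F_i$, plus a second null-boundary cover for the leftover set.

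When you write it up, keep only the final construction, for three reasons:
\begin{itemize}
\item the first two attempts are superseded;
\item you reuse $F_i$ for two different families of sets;
\item naming the open set $\bigcup_i W_i'$ as $\overline{W}$ is risky, because if it were read as a closure the $A_i$ would miss $\bigcup_i\partial W_i'$ and fail to cover $X$.
\end{itemize}
Also state the estimate you leave implicit. Since the $W_i'$ are pairwise disjoint and $F_j\subset W_j'$, one has $W_i'\cap B_j\subset B_j\setminus F_j$ for $j\ne i$. Together with $X\setminus\bigcup_j W_j'\subset\bigcup_j(B_j\setminus F_j)$, this yields $\mu(A_i\Delta B_i)<2m\eta$.
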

\begin{proof}
	Let $\{x_k\}_{k=1}^\infty$ be a dense subset of $X$. Since for each $x_i$, there is at most countably many  $r>0$ such that $$\mu(\{y\in X:d(y,x_k)=r\})>0.$$
Hence we can choose a sequence $\{r_n\}_{n=1}^\infty$ decreasing to $0$ such that
$$\mu(\partial B_{r_n}(x_k))=0,\ \forall k,\ n\in\N.$$
	Consider the set
$$\{\a\in\U^*: \a=\{V_1,\ldots,V_i-\cup_{j=1}^{i-1}V_j,\ldots,V_m-\cup_{j=1}^{m-1}V_j\}, V_i\text{\ is\ the\ union\ of\  finite\ }B_{r_n}(x_k)\}.$$
We  enumerate this countable family as $\{\a_l\}_{l=1}^\infty$.  It is clear that $\mu(\partial\a_l)=0$ for each $l\in\N$.

By Lemma \ref{la}, we have $h_{\mu}^+(T,\U)=\inf_{\a\in \U^*} h_{\mu}(T,\a).$  To prove  $\inf_{\a\in \U^*} h_{\mu}(T,\a)=\inf_{l\in\N} h_{\mu}(T,\a_l)$, it suffices to  show that for any $\beta\in\U^*$ and $\ep>0,$  there exists some $\a\in \{\a_l\}_{l=1}^\infty$	 such that $\mu(\a\Delta\beta)<\ep.$ Let $\beta=\{B_1,\ldots,B_m\}$
with  $B_i\subset U_i, \ i=1,\ldots,m$. For each $i=1,\ldots,m$, choose a compact subset $K_i\subset B_i$ with $\mu(B_i-K_i)<\frac{\ep}{2m^2}$.  Put
$$O_i=U_i\cap (\bigcup_{j\neq i }K_j)^c, i=1,\ldots,m.$$
Note that $B_i=(\bigcup_{j\neq i }B_j)^c\subset (\bigcup_{j\neq i }K_j)^c$, it follows that
$K_i\subset B_i\subset O_i$, and
$$\mu(O_i-K_i)= \mu((O_i-B_i)\cup(B_i-K_i))\le \sum_{i=1}^m\mu(B_i-K_i)<\frac{\ep}{2m}.$$
Furthermore, $\{O_1,\ldots,O_m\}$ forms an open cover of $X$. Let $\delta>0$ be the Lebesgue number of $\{O_i\}_{i=1}^m.$ Choose some $r_n<\frac{\delta}{2}$.
It follows that $\{B_{r_n}(x_k)\}_{k=1}^\infty$ covers $X$, and then there exists a finite subcover $\A$. Let $$V_i=\bigcup_{A\in\A,A\subset O_i}A, i=1,\ldots,m.$$
It is easy to see that $\{V_i\}_{i=1}^m$ is an open cover of $X$. For $1\le i\neq j\le m$, we have $$V_j\subset O_j\subset (\bigcup_{i\neq j }K_i)^c,$$ and hence $K_i\cap V_j=\emptyset$. It follows that  $K_i\subset V_i$.
Set
$$\a=\{V_1,\ldots,V_i-\cup_{j=1}^{i-1}V_j,\ldots,V_m-\cup_{j=1}^{m-1}V_j\}.$$ By construction, we have $\a\in \{\a_l\}_{l=1}^\infty.$ It is easy to see that for each $i=1,\dots,m$,
$$K_i\subset V_i-\cup_{j=1}^{i-1}V_j\subset O_i.$$
Hence $$\mu(B_i\Delta (V_i-\cup_{j=1}^{i-1}V_j))\leq 2\mu(O_i-K_i)<\frac{\ep}{m}, $$
and then $\mu(\a\Delta\beta)<\ep.$
This completes the proof.
\end{proof}

The following lemma is inspired by \cite[Lemma 4.4]{hy}.
\begin{lem}\label{l3}
	Let $X$ be a compact metric space and $Y$ be a non-empty  subset of $X$. Consider $f\in C(X)$, $\U\in \mathcal{C}^o_X$ and $\{\a_l\}_{l=1}^K$ as $K$ finite  partitions of $X$ that are finer than $\U$. For every $\ep>0$, there exists a finite subset $B$ of $Y$ such that for each $l=1,\ldots,K$, each atom of $\a_l$ contains at most one point of $B$, and $$\sum_{y\in B}\mathrm{e}^{f(y)}\geq\frac{\inf\{\sum_{V\in \V}\sup_{y\in V}\emph{e}^{f(y)}:\V\in \mathcal{C}_{Y},\V\succeq\U\}}{K}-\ep.$$
	In particular, one can choose $B$ such that
$$\sum_{y\in B}\mathrm{e}^{f(y)}\geq\frac{1}{2K}\inf\{\sum_{V\in \V}\sup_{y\in V}\mathrm{e}^{f(y)}:\V\in \mathcal{C}_{Y},\V\succeq\U\}.$$
\end{lem}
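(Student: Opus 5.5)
We would follow the greedy/induction scheme of \cite[Lemma 4.4]{hy}. Write $\alpha_l=\{A^l_1,\dots,A^l_{m_l}\}$ and set
$$I(Z)=\inf\Big\{\sum_{V\in \V}\sup_{y\in V}\mathrm{e}^{f(y)}:\V\in \mathcal{C}_{Z},\V\succeq\U\Big\}$$
for $Z\subset Y$. The aim is to produce points $y_1,y_2,\dots$ of $Y$ one at a time. Each new point is chosen so that it nearly maximizes $\mathrm{e}^{f}$ on what remains of $Y$. After it is chosen, we remove from $Y$ all atoms of every $\alpha_l$ that contain it.

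Concretely, set $Y_0=Y$. Given $Y_{j-1}\neq\emptyset$, pick $y_j\in Y_{j-1}$ with
$$\mathrm{e}^{f(y_j)}\ge \sup_{y\in Y_{j-1}}\mathrm{e}^{f(y)}-\frac{\ep}{N},$$
where $N$ is a bound fixed in advance. Then put $W_j=\bigcup_{l=1}^K A^l(y_j)$, where $A^l(y_j)$ is the atom of $\alpha_l$ containing $y_j$, and set $Y_j=Y_{j-1}\setminus W_j$.

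The process stops after finitely many steps. Each step deletes at least one atom of $\alpha_1$, so there are at most $N:=\sum_l m_l$ steps (already $m_1$ would do). Let $B=\{y_1,\dots,y_s\}$. If $j<k$, then $y_k\notin W_j$, so $y_k$ lies in no atom containing $y_j$. Hence every atom of every $\alpha_l$ contains at most one point of $B$.

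For the lower bound, consider the family $\{A^l(y_j)\cap Y_{j-1}: 1\le j\le s,\ 1\le l\le K\}$. This covers $Y$, because $Y=\bigcup_j (W_j\cap Y_{j-1})$. It is a Borel cover of $Y$ finer than $\U$, since each $\alpha_l\succeq\U$. Each of its members lies in $Y_{j-1}$, so its supremum of $\mathrm{e}^f$ is at most $\mathrm{e}^{f(y_j)}+\ep/N$. Summing over the $sK$ members gives
$$I(Y)\le K\sum_{j}\mathrm{e}^{f(y_j)}+K\ep.$$
This is exactly the first inequality. We are allowed to discard empty sets from the cover, since they only lower the sum.

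For the \emph{in particular} part, note that $I(Y)>0$: $Y\ne\emptyset$ and $\mathrm{e}^f>0$, so any cover has a nonempty member. Choose $\ep=I(Y)/(2K)$ in the first part. The step needing most care is the bookkeeping: making sure the restricted cover is legitimately finer than $\U$ and is indexed so that the factor is exactly $K$. Everything else is routine.
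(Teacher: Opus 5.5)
Your proposal is correct and is essentially the paper's proof. It uses the same greedy choice of near-maximizers $y_j\in Y_{j-1}$, the same removal of all atoms $\alpha_l(y_j)$, and the same comparison with the cover $\{Y_{j-1}\cap\alpha_l(y_j)\}_{j,l}$ of $Y$; the only cosmetic differences are that you use a uniform error $\epsilon/N$ (with $N$ an a priori bound on the number of steps) where the paper uses $\epsilon/2^j$, and that you spell out the ``in particular'' clause via $I(Y)\ge \mathrm{e}^{\min f}>0$ and $\epsilon=I(Y)/(2K)$, which the paper leaves implicit.
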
	
\begin{proof}
Given $\ep>0$,  choose $y_1\in Y$ such that
$$\text{e}^{f(y_1)}>\sup\limits_{y\in Y} \text{e}^{f(y)}-\frac{\ep}{2}.$$
Let
$$Y_1=Y-\bigcup\limits_{l=1}^K\a_l(y_1),$$
where $\a_l(y_1)$ denotes the atom of $\a_l$  that contains $y_1.$ If $Y_1=\emptyset,$ set $B=\{y_1\}$. Otherwise choose $y_2\in Y_1$ such that
$$\text{e}^{f(y_2)}>\sup\limits_{y\in Y_1} \text{e}^{f(y)}-\frac{\ep}{2^2}.$$
 Let
 $$Y_2=Y_1-\bigcup\limits_{l=1}^K\a_l(y_2).$$
  If $Y_2=\emptyset,$ set $B=\{y_1,y_2\}$. Otherwise choose $y_3\in Y_2$ with
  $$\text{e}^{f(y_3)}>\sup\limits_{y\in Y_2} \text{e}^{f(y)}-\frac{\ep}{2^3}.$$
  Since each $\a_l$ is a finite partition, continuing inductively the procedure,  we get a sequence of sets  $Y_1,\ldots,Y_m\subset Y$ and a finite set $B=\{y_1,\ldots,y_m\}\subset Y$.  It is easy to see that for each $l=1,\ldots,K$, each atom of $\a_l$ contains at most one point of $B$. Now let
  $$\beta=\{Y_{j-1}\cap \a_l(y_j)\}_{1\leq j\leq m,1\leq l\leq K}$$
  with $Y_0=Y$.
It follows that $\beta\in \mathcal{C}_Y$ and $\beta\succeq\U$. Hence
	\begin{equation*}
		\begin{split}
			\sum_{y\in B}\text{e}^{f(y)}=\sum_{j=1}^m\text{e}^{f(y_j)}&\geq \sum_{j=1}^m\frac{1}{K}\sum_{l=1}^K (\sup_{y\in Y_{j-1}\cap \a_l(y_j)}\text{e}^{f(y)}-\frac{\ep}{2^j})\\
			&\geq\frac{\inf\{\sum_{V\in \V}\sup_{y\in V}\text{e}^{f(y)}:\V\in \mathcal{C}_{Y},\V\succeq\U\}}{K}-\ep.
		\end{split}
	\end{equation*}
This completes the proof.
\end{proof}

We are in a position to prove our main result of this subsection.
\begin{customthm}{{\bf Theorem \ref{thm:measure-relation}}  (The invertible case)}\label{thm:measure-relation:invertible}
{\it Let $(Y,S)$ be an invertible TDS, $\mathbfcal{F}=\{{\bf f}_n\}_{n=1}^\infty$ be  a block sub-additive potential, $\mu\in M(Y,S)$ and $\U\in \C_Y^o$. Then for the sub-additive potential $\mathcal{F}$ induced by $\mathbfcal{F}$, we have
 $$ GP_\mu (S,\mathbfcal{F},\U)\leq P_\mu(S,\mathcal{F}, \U)= h_{\mu}(S,\U)+\mathcal{F}_*(\mu).$$
If additionally $\mu$ is ergodic, then
$$ GP_\mu (S,\mathbfcal{F},\U)=P_\mu(S,\mathcal{F}, \U)= h_{\mu}(S,\U)+\mathcal{F}_*(\mu).$$}
\end{customthm}

\begin{proof}
We proceed with the proof in two steps.

\medskip
{\bf Step 1}:  Prove that $ GP_\mu (S,\mathbfcal{F},\U)\leq h_{\mu}(S,\U)+ \mathcal{F}_*(\mu).$

 \smallskip
 Let $\{g_k\}_{k=1}^\infty$ be a dense subset of $C(Y)$. First we  show that:

\medskip
{\bf Claim 1}:
 There exist suitable sequences $\{\delta_n\}$, decreasing to $0$,  and $\{L_n\}_{n=1}^\infty\subset \L_{C(Y)}$ such that for any $g\in \{g_k\}_{k=1}^\infty$, there exists $N\in\N$ such that $g\in L_n$ whenever $n>N$, and
\begin{equation*}
	\begin{split}		
GP_\mu (S,\mathbfcal{F},\U)\leq\varlimsup\limits_{n\to\infty}\frac{1}{n}\log GP_{n,\delta_n,\mu,L_n}(S,\mathbfcal{F},\U). 	
	\end{split}
\end{equation*}

\smallskip
 {\it Proof of Claim 1}:
 For any $k\in\N,$ by definition we have
   $$\varlimsup\limits_{n\to\infty}\frac{1}{n}\log GP_{n,\frac{1}{k},\mu,\{g_1,\ldots,g_k\}}(S,\mathbfcal{F},\U)> GP_\mu(S,\mathbfcal{F},\U)-\frac{1}{k}.$$
 Choose an increasing sequence $\{n_k\}_{k=1}^\infty$ such that
 $$\frac{1}{n_k}\log GP_{n_k,\frac{1}{k},\mu,\{g_1,\ldots,g_k\}}(S,\mathbfcal{F},\U)> GP_\mu(S,\mathbfcal{F},\U)-\frac{1}{k},\ \forall\  k\in\N.$$	
Determine $\{\delta_n\}$ and $\{L_n\}_{n=1}^\infty\subset \L_{C(Y)}$ in a rule that: if $n\notin\{n_k\}_{k=1}^\infty$, set
 $$\delta_n=\frac{1}{n},\ \text{and }  L_{n}=\{g_1,\ldots,g_n\};$$ otherwise, for each  $n\in\{n_k\}_{k=1}^\infty$, set $$\delta_{n_k}=\frac{1}{k}, \ \text{and }  L_{n_k}=\{g_1,\ldots,g_k\},\  k\in\N.$$
 It is easy to check that $\{\delta_n\}_{n=1}^\infty$ and $\{L_n\}_{n=1}^\infty$ satisfy the required property.
This completes the proof of the Claim 1.

\medskip
By Lemma \ref{count},
we can find a countable sequence $\{\a_l\}_{l=1}^\infty\subset \U^*$ such that $\mu(\partial\a_l)=0$ for each $l\in\N$, and \begin{equation}\label{equation-4}
h_{\mu}^+(S,\U)=\inf_{l\in\N} h_{\mu}(S,\a_l).
\end{equation}
For $n\in\N$, since  $\{(\a_l)^n\}_{l=1}^n$ are $n$ finite partitions of $Y^n$  that are finer than $\U^n$, then by Lemma \ref{l3}, there exists a finite subset $B_n\subset Y^n_{\delta_n,\mu,L_n}$ such that for each $l$, each atom of $(\a_l)^n$ contains at most one point of $B_n$, and
\begin{equation}\label{equation-5}
\sum_{{\bf x}\in B_n}\text{e}^{{\bf f}_n({\bf x})}\geq\frac{1}{2n}GP_{n,\delta_n,\mu,L_n}(S,\mathbfcal{F},\U).
\end{equation}
Set
$$\mu_n=\frac{1}{n}\sum_{{\bf x}\in B_n}\sum_{j=0}^{n-1}b_{n, {\bf x}}\delta_{x_j}=\sum_{{\bf x}\in B_n}b_{n, {\bf x}}\Bigg(\frac{1}{n}\sum_{j=0}^{n-1}\delta_{x_j}\Bigg),$$
where  $$b_{n, {\bf x}}=\frac{\text{e}^{{\bf f}_n({\bf x})}}{\sum_{{\bf y}\in B_n}\text{e}^{{\bf f}_n({\bf y})}}.$$

\medskip
{\bf Claim 2}: We have that $\mu_n\to \mu$ as $n\to \infty$.

\smallskip
{\it Proof of Claim 2}: Let $\ep>0$. For each $f\in C(Y)$ with $p_L(f)\le 1$,  there exists some $g\in \{g_k\}_{k=1}^\infty$ such that $||f-g||_\infty<\frac{\ep}{3}$. By Claim 1, we can find a  large enough $N\in\N$ such that for any $n\ge N$, $\delta_n\in (0,\frac{\ep}{3})$ and $g\in L_n$. Observe that $B_n\subset Y^n_{\delta_n,\mu,L_n}$ and $\sum_{{\bf x}\in B_n}b_{n, {\bf x}}=1$. Then
\begin{align*}
\bigg|\int f \text{d}\mu_n-\int f\text{d}\mu\bigg| &\le  \Bigg|\sum_{{\bf x}\in B_n}b_{n, {\bf x}}\Bigg(\frac{1}{n}\sum_{j=0}^{n-1}f({x_j})\Bigg)
-\sum_{{\bf x}\in B_n}b_{n, {\bf x}}\Bigg(\frac{1}{n}\sum_{j=0}^{n-1}g({x_j})\Bigg)\Bigg|\\
&\ \ \ \ +\Bigg|\sum_{{\bf x}\in B_n}b_{n, {\bf x}}\Bigg(\frac{1}{n}\sum_{j=0}^{n-1}g({x_j})\Bigg)-\int g\text{d}\mu\Bigg|
+\bigg|\int g \text{d}\mu-\int f\text{d}\mu\bigg|\\
&\le \frac{\ep}{3}+\delta_n+\frac{\ep}{3}< \ep.
\end{align*}
It follows that $D(\mu_n, \mu)<\ep$. This completes the proof of Claim 2.

\medskip
Fix $x^*\in Y$.
Set
$$\tilde{\eta}_n=\sum_{{\bf x}\in B_n}b_{n, {\bf x}}\delta_{({\bf x}, x^*, x^*,\dots)}.$$
It is clear that $\tilde{\eta}_n\in M(Y^\N).$
Put $$\tilde{\nu}_n=\frac{1}{n}\sum_{j=0}^{n-1}\tilde{\eta}_n\circ \sigma^{-j}.$$	
Let $\phi:Y\to Y^{\N}$ be the function imbeds $Y$ inside $Y^\N$, given by
$$\phi(x)=(x, Sx,\ldots,S^{n}x, \ldots).$$
It induces naturally an embedding map  $\phi_*: M(Y)\to M(Y^{\N})$.

\medskip

\smallskip
{\bf Claim 3}:   We have that $\tilde{\nu}_n\to\phi_*(\mu)$ as $n\to\infty$.

\smallskip
{\it Proof of Claim 3}:  When $n\to\infty$,   we have $\mu_n\to \mu$ by Claim 2,  and then
$$\phi_*(\mu_n)=\frac{1}{n}\sum_{{\bf x}\in B_n} \sum_{j=0}^{n-1} b_{n,{\bf x}}\delta_{(x_j,Sx_j,S^2x_j,\ldots)}\to \phi_*(\mu).$$
Take any $\eta>0$. Choose some $N\in \N$ such that $\frac{\diam(Y)}{2^{N-2}}<\eta.$
By continuity, there exists some $0<\theta<\frac{\eta}{2N}$ such that whenever $x,y\in Y$ with
$d(x,y)<\theta$, we have
$$
d(S^ix,S^iy)<\frac{\eta}{2N}, \ i=0,\ldots,N-1.
$$
Since the sequence $\delta_n$ decreases to 0, we can choose $N_0>{N}/{\eta}$ such that for all $n>N_0$,
$\delta_n<\theta^2$. Now fix an $n>N_0$. For each ${\bf x}\in B_n$, denote
$$
B_{n,{\bf x}}=\{0\le j\le n-2: d(Sx_j, x_{j+1})>\sqrt{\delta_n}\}.
$$
We decompose $\{j:0\leq j\leq n-1\}$ into disjoint three subsets as follows:
$$
 S_1=\{0\leq j\leq n-N: j,j+1,\ldots,j+N-2\notin B_{n,{\bf x}}\},
 $$
 $$
 S_2=\big\{0\leq j\leq n-N: \{j,j+1,\ldots,j+N-2\}\cap B_{n,{\bf x}}\neq\emptyset\big\},
 $$
 and
$$
S_3= \{j: n-N+1\leq j\leq n-1\}.
$$
 When $j\in  S_1$, one has
 $$d(Sx_s,x_{s+1})\leq\sqrt{\delta_n},\  s=j, j+1,\ldots, j+N-2.$$
It follows that  for every $t=0,\ldots,N-1$,  $$d(S^tx_j,x_{j+t})\leq\sum_{\ell=0}^{t-1}d(S^{t-\ell}x_{j+\ell},S^{t-\ell-1}x_{j+\ell+1})<\frac{t\eta}{2N}<\frac{\eta}{2}.$$
Then
\begin{equation*}
\tilde{d}\left((x_j,x_{j+1},x_{j+2},\ldots),(x_j,Sx_j,S^2x_j,\ldots)\right)\le \sum_{t=0}^{N-1}\frac{d(x_{j+t},S^{t}x_{j})}{2^{t}}+\frac{\eta}{2}< \eta,
\end{equation*}
and so
$$D\big(\delta_{(x_j,x_{j+1},x_{j+2},\ldots)},\delta_{(x_j,Sx_j,S^2x_j,\ldots)}\big)\leq\eta.$$
When $j\in S_2\cup S_3$, it is easy to see that
\begin{equation*}
D\big(\delta_{(x_j,x_{j+1},x_{j+2},\ldots)},\delta_{(x_j,Sx_j,S^2x_j,\ldots)}\big)\leq  \diam(Y)
\end{equation*}
Since  ${\bf x}\in B_n\subset Y^n_{\delta_n},$ then by Remark \ref{rem:average-pseudo-orbit} (1), $|B_{n,{\bf x}}|<(n-1)\sqrt{\delta_n}.$ Consequently,
$$|S_2|\leq N|B_{n,{\bf x}}|< Nn\sqrt{\delta_n}\leq \frac{n\eta}{2}.$$
Moreover, we have $$|S_3|=N-1<n\eta.$$
It follows that
 \begin{equation*}
\begin{split}
	D\big(\tilde{\nu}_n,\phi_*(\mu_n)\big)<\frac{1}{n}\sum_{{\bf x}\in B_n}b_{n, {\bf x}}\Big(n\eta+n\eta \diam(Y)+n\eta \diam(Y)\Big).
\end{split}
\end{equation*}
Observe that $\sum\limits_{{\bf x}\in B_n}b_{n,{\bf x}}=1,$ then
$$	D\big(\tilde{\nu}_n,\phi_*(\mu_n)\big)\leq \eta+\eta \diam(Y)+\eta \diam(Y).$$
By the arbitrary of $\eta$ and the uniqueness of limit, we obtain that  $\tilde{\nu}_n\to \phi_*(\mu)$ as $n\to \infty$. This completes the proof of Claim 3.

\medskip
Let $\pi_{n-1}:Y^\N\to Y^n$ be the  projection  onto the  initial $n$ components. For each $l=1,\dots, n$, denote $$\pi_0^{-1}\a_l=\{\pi_0^{-1}(A):A\in \a_l\}.$$ Then $\pi_0^{-1}\a_l\in\mathcal{P}_{Y^\N}.$

\medskip
{\bf Claim 4}:
Fix $l,n,p\in\N$ with $p<n$ and $n>l$. We have
\begin{align*}
\frac{1}{n}\log GP_{n,\delta_n,\mu,L_n}(S,\mathbfcal{F},\U)\le \frac{1}{p}H_{\tilde{\nu}_n}\Big(\bigvee\limits_{i=0}^{p-1}\sigma^{-i}(\pi_0^{-1}\a_l)\Big)+\frac{1}{n}	\int {\bf f}_n\circ\pi_{n-1}\text{d}\tilde{\eta}_n+\frac{2p\log m+\log 2n}{n}.	
\end{align*}
{\it Proof of Claim 4}: For fixed $l,n\in\N$ with $n>l$, since each atom of $(\a_l)^n$ contains at most one point of $B_n$, then combining with inequality \eqref{equation-5} and Lemma \ref{lw}, we have
\begin{equation*}
	\begin{split}
\log GP_{n,\delta_n,\mu,L_n}(S,\mathbfcal{F},\U)-\log 2n & \leq\log\sum_{{\bf x}\in B_n}\text{e}^{{\bf f}_n({\bf x})}\\
&=\sum_{{\bf x}\in B_n} b_{n,{\bf x}}\Big({\bf f}_n({\bf x})-\log b_{n,{\bf x}}\Big)	\\
&=\sum_{{\bf x}\in B_n} \tilde{\eta}_n(\{({\bf x},x^*, \dots)\})({\bf f}_n({\bf x})-\log \tilde{\eta}_n(\{({\bf x},x^*,\dots)\}))\\
&=H_{\tilde{\eta}_n}\Big(\bigvee\limits_{i=0}^{n-1}\sigma^{-i}(\pi_0^{-1}\a_l)\Big)	+\int {\bf f}_n\circ\pi_{n-1}\text{d}\tilde{\eta}_n.
	\end{split}
\end{equation*}
Fix $p \in \N$ with  $p<n$. For each $j=0,1,\ldots,p-1,$ rewrite
$$\bigvee_{i=0}^{n-1}\sigma^{-i}(\pi_0^{-1}\a_l)=\bigvee_{r=0}^{[\frac{n-j}{p}]-1}\sigma^{-(pr+j)}\Bigg(\bigvee\limits_{i=0}^{p-1}
\sigma^{-i}(\pi_0^{-1}\a_l)\Bigg)\vee\bigvee_{t\in S_j}\sigma^{-t}(\pi_0^{-1}\a_l),$$
where $S_j=\{0,1,\ldots,j-1\}\cup\{j+p[\frac{n-j}{p}],\ldots,n-1\}.$
Then
\begin{equation*}
\begin{split}
	&H_{\tilde{\eta}_n}\Big(\bigvee\limits_{i=0}^{n-1}\sigma^{-i}(\pi_0^{-1}\a_l)\Big)	+\int {\bf f}_n\circ\pi_{n-1}\text{d}\tilde{\eta}_n\\
	\leq&	\sum_{r=0}^{[\frac{n-j}{p}]-1}H_{\sigma^{(pr+j)}\tilde{\eta}_n}	\Big(\bigvee\limits_{i=0}^{p-1}\sigma^{-i}(\pi_0^{-1}\a_l)\Big)+2p\log m+\int {\bf f}_n\circ\pi_{n-1}\text{d}\tilde{\eta}_n.
\end{split}
\end{equation*}
Summing  over $j$ from $0$ to $p-1$ and dividing by $pn$, we have
\begin{equation*}
	\begin{split}
		&	\frac{1}{n}\log GP_{n,\delta_n,\mu,L_n}(S,\mathbfcal{F},\U)\\
	\leq&\ \frac{1}{pn}\sum_{j=0}^{n-1}H_{\sigma^j\tilde{\eta}_n}\Big(\bigvee\limits_{i=0}^{p-1}\sigma^{-i}(\pi_0^{-1}\a_l)\Big)+\frac{1}{n}	\int {\bf f}_n\circ\pi_{n-1}\text{d}\tilde{\eta}_n+\frac{2p\log m+\log 2n}{n}\\	
	\leq&\ \frac{1}{p}H_{\tilde{\nu}_n}\Big(\bigvee\limits_{i=0}^{p-1}\sigma^{-i}(\pi_0^{-1}\a_l)\Big)+\frac{1}{n}	\int {\bf f}_n\circ\pi_{n-1}\text{d}\tilde{\eta}_n+\frac{2p\log m+\log 2n}{n}.	
	\end{split}
\end{equation*}
The last inequality holds because $H_{\{\cdot\}}\Big(\bigvee\limits_{i=0}^{p-1}\sigma^{-i}(\pi_0^{-1}\a_l)\Big)$ is concave. This completes the proof of Claim 4.

\medskip
Since $\mathbfcal{F}=\{{\bf f}_n\}_{n=1}^\infty$ is a block sub-additive potential on $Y$, it is not hard to verify that $\{{\bf f}_n\circ \pi_{n-1}\}_{n=1}^\infty$ forms a sub-additive potential for $(Y^\N,\sigma)$.
By Lemma \ref{subadd}, we have $$\varlimsup_{n\to \infty}\frac{1}{n}	\int {\bf f}_n\circ\pi_{n-1}\text{d}\tilde{\eta}_n\leq\lim_{n\to \infty}\frac{1}{n}\int {\bf f}_n\circ\pi_{n-1} \text{d}\phi_*(\mu).$$
Observe that $\partial(\pi_0^{-1}\a_l)\subset \pi_0^{-1}(\partial\a_l)$ and $\mu(\partial\a_l)=0$, then
 $\phi_*(\mu)(\partial(\pi_0^{-1}\a_l))=0.$ Hence
\begin{align*}
	&\ \ \ \varlimsup\limits_{n\to\infty}\frac{1}{n}\log GP_{n,\delta_n,\mu,L_n}(S,\mathbfcal{F},\U)\\
&\leq \frac{1}{p}H_{\phi_*(\mu)}\Big(\bigvee\limits_{i=0}^{p-1}\sigma^{-i}(\pi_0^{-1}\a_l)\Big)+\lim_{n\to\infty}\frac{1}{n}\int {\bf f}_n\circ\pi_{n-1} \text{d}\phi_*(\mu)\\
&=\frac{1}{p}H_{\mu}\Big(\bigvee\limits_{i=0}^{p-1}S^{-i}{\a}_l\Big)+\lim_{n\to\infty}\frac{1}{n}\int{{f_n}}\ \text{d}\mu.	
\end{align*}
Let $p\to \infty,$ we obtain that
$$GP_\mu(S,\mathbfcal{F},\U)\leq h_\mu(S,\a_l)+ \mathcal{F}_*(\mu).$$
Furthermore, by Lemma \ref{lb} and equation \eqref{equation-4}, we have
$$h_{\mu}(S,\U)=h_\mu^+(S,\U)=\inf_{l\in\N}h_\mu(S,\a_l).$$
 Hence
$$ GP_\mu (S,\mathbfcal{F},\U)\leq h_{\mu}(S,\U)+ \mathcal{F}_*(\mu).$$

\medskip
{\bf Step 2}:  Prove that
$
GP_\mu (S,\mathbfcal{F},\U)= h_{\mu}(S,\U)+ \mathcal{F}_*(\mu)
$
when $\mu$ is ergodic.

\smallskip
Denote $$\text{orb}_n(Y)=\{(y,Sy,\ldots,S^{n-1}y):y\in Y\}.$$
Fix $\delta>0$ and $L\in\L_{C(Y)}$. Put
$$Y_n=\Big\{y\in Y:\max\limits_{g\in L}\Big|\frac{1}{n}\sum_{i=0}^{n-1}g(S^iy)-\int g \text{d}\mu\Big|\leq\delta\Big\}.$$
As $\mu$ is ergodic, according to the Birkhoff pointwise ergodic theorem, we have  $\mu(Y_n)\to 1$ when $n\to\infty$.
Observe that
\begin{equation*}
	\begin{split}
&\varlimsup\limits_{n\to\infty}\frac{1}{n}\log\inf\{\sum_{V\in \V}\sup_{{\bf y}\in V}\text{e}^{{\bf f}_n({\bf y})}:\V\in \mathcal{C}_{Y^n_{\delta,\mu,L}},\V\succeq\U^n\}\\
\ge  &\varlimsup\limits_{n\to\infty}\frac{1}{n}\log \inf\{\sum_{V\in \V}\sup_{{\bf y}\in V}\text{e}^{{\bf f}_n({\bf y})}:\V\in \mathcal{C}_{\text{orb}_n(Y)\cap Y^n_L(\delta,\mu)},\V\succeq\U^n\}\\
=&\ \varlimsup\limits_{n\to\infty}\frac{1}{n}\log \inf\{\sum_{V\in \V}\sup_{y\in V}\text{e}^{f_n(y)}:\V\in \mathcal{C}_{Y_n},\V\succeq\U^{n-1}_0\}.		
	\end{split}
\end{equation*}
To finish the proof, it suffices to show that
\begin{equation*}
		\begin{split}
&\varlimsup\limits_{n\to\infty}\frac{1}{n}\log \inf\{\sum_{V\in \V}\sup_{y\in V}\text{e}^{f_n(y)}:\V\in \mathcal{C}_{Y_n},\V\succeq\U^{n-1}_0\}
\geq \ h_\mu(S,\U)+		
\lim_{n\to \infty}\int_{Y}\frac{f_n}{n}\text{d}\mu.		
	\end{split}
\end{equation*}
Based on Remark \ref{rem:measure-pressure-GSP} (1), we only need to estimate
$\sum\limits_{B\in \beta}\sup\limits_{y\in B}\text{e}^{f_n(y)}$ for a given $\beta\in  \mathcal{P}_{Y_n} $ with $\beta\succeq\U^{n-1}_0.$

	By Lemma \ref{lw}, we have
\begin{align*}
		\log\sum_{B\in \beta}\sup_{y\in B}\text{e}^{f_n(y)}&\geq \sum_{B\in \beta}\frac{\mu(B)}{\mu(Y_n)}\Big(\sup_{y\in B}f_n(y)-\log\frac{\mu(B)}{\mu(Y_n)}\Big)\\
		&\geq\frac{1}{\mu(Y_n)}\Big(\int_{Y_n}f_n\ \text{d}\mu-\sum_{B\in \beta}\mu(B)\log\mu(B)\Big)+\log\mu(Y_n).
\end{align*}
Choose a partition $\a=\{A_1,\ldots,A_N, Y_n\}\in \mathcal{P}_Y$ such that each $A_i \ (i=1,\ldots,N)$ is contained in some element of $\U^{n-1}_0$,  and $N\leq N(\U^{n-1}_0).$ Denote $\tilde{\beta}=\beta\cup \{Y_n^c\}$. Then $\tilde{\beta}\in \mathcal{P}_Y$ and $\a\vee \tilde{\beta}\succeq\U^{n-1}_0.$ It follows that $H_\mu(\a\vee \tilde{\beta})\geq H_\mu(\U^{n-1}_0).$ On the other hand,  we have
\begin{equation*}
	\begin{split}
	H_\mu(\a\vee \tilde{\beta})&=-\sum_{B\in \beta}\mu(B)\log\mu(B)-\sum_{i=1}^N	\mu(A_i)\log\mu(A_i)\\
	&\leq -\sum_{B\in \beta}\mu(B)\log\mu(B)+\mu(Y_n^c)\big(\log N-\log \mu(Y_n^c)\big).\\
	\end{split}
\end{equation*}
The final inequality can be derived from Lemma \ref{lw} by considering the specific case where $a_i=0$ and $p_i=\mu(A_i)/\mu(Y_n^c)$.
Hence
\begin{equation*}
	\begin{split}
\frac{1}{n}\log\sum_{B\in \beta}\sup_{y\in B}\text{e}^{f_n(y)}  \ge & \frac{1}{\mu(Y_n)}\Big(\int_{Y_n}\frac{f_n}{n}\text{d}\mu+\frac{1}{n}H_\mu(\U^{n-1}_0)\\
&\ -\mu(Y_n^c)\cdot\frac{\log N(\U^{n-1}_0)-\log \mu(Y_n^c)}{n}\Big)+\frac{\log\mu(Y_n)}{n}.
	\end{split}
\end{equation*}
Note that by sub-additivity,
$$\sup_{y\in Y}f_n(y)\leq n \sup_{y\in Y}{f}_1(y),$$ and then
\begin{equation*}
	\begin{split}
\int_{Y_n}\frac{f_n}{n}\text{d}\mu& =\int_{Y}\frac{f_n}{n}\text{d}\mu-\int_{Y_n^c}\frac{f_n}{n}\text{d}\mu\\
&\geq 	\int_{Y}\frac{f_n}{n}\text{d}\mu-\mu(Y_n^c)\sup_{y\in Y}{f}_1(y).	
	\end{split}
\end{equation*}
Taking infimum over $\beta$ and letting $n\to\infty$, it follows that
\begin{equation*}
	\begin{split}
		\varlimsup\limits_{n\to\infty}\frac{1}{n}\log \inf\{\sum_{V\in \V}\sup_{y\in V}\text{e}^{f_n(y)}:\V\in \mathcal{C}_{Y_n},\V\succeq\U^{n-1}_0\}
		\geq\ h_\mu(S,\U)+		
		\lim_{n\to \infty}\int_{Y}\frac{f_n}{n}\text{d}\mu.		
	\end{split}
\end{equation*}
This completes the whole proof.
\end{proof}

\subsection{Proof of Theorem \ref{thm:measure-relation}: the non-invertible case}
This subsection aims to establish a lifting property, extending Theorem \ref{thm:measure-relation} from the invertible case to the general case. First, we prepare some terminologies.

Let $(X,T)$ and $(Y,S)$ be two TDSs. A continuous map $\pi:X\to Y$ is called a {\it factor map} between $(X,T)$ and $(Y,S)$ if it is surjective and satisfies $\pi\circ T=S\circ\pi.$ Define $\pi_*: M(X)\to M(Y)$ as the induced  map, which sends $\nu$ to $\nu\circ \pi^{-1}$, between spaces of Borel probability measures. It is easy to see that $\pi^*$ is a factor map.

	\begin{lem}[\cite{R03}, Proposition 6]\label{ld}
		Let $\pi:(X,T)\to (Y,S)$ be a factor map between two TDSs and $\U\in\mathcal{C}_Y^o$. Then for any $\mu\in M(X,T)$, we have $$h_\mu(T,\pi^{-1}\U)=h_{\pi_*\mu}(S,\U).$$
	\end{lem}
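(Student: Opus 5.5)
The plan is to reduce everything to a single fixed cover, i.e. to prove the static identity
$$H_\mu(\pi^{-1}\mathcal W)=H_{\pi_*\mu}(\mathcal W)\qquad\text{for every }\mathcal W\in\mathcal C_Y^o .$$
This suffices because $\pi\circ T=S\circ\pi$ gives $T^{-j}\pi^{-1}\U=\pi^{-1}S^{-j}\U$, hence
$$(\pi^{-1}\U)_0^{n-1}=\pi^{-1}\big(\U_0^{n-1}\big),$$
where $\U_0^{n-1}$ is formed with respect to $S$. Applying the static identity to $\mathcal W=\U_0^{n-1}$, dividing by $n$ and letting $n\to\infty$ yields $h_\mu(T,\pi^{-1}\U)=h_{\pi_*\mu}(S,\U)$. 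Write $\nu=\pi_*\mu$ throughout.

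The inequality $H_\mu(\pi^{-1}\mathcal W)\le H_\nu(\mathcal W)$ is immediate. If $\beta\in\mathcal P_Y$ with $\beta\succeq\mathcal W$, then $\pi^{-1}\beta\in\mathcal P_X$ and $\pi^{-1}\beta\succeq\pi^{-1}\mathcal W$. Moreover $\mu(\pi^{-1}B)=\nu(B)$ for every $B\in\beta$, so $H_\mu(\pi^{-1}\beta)=H_\nu(\beta)$. Taking the infimum over $\beta$ gives the bound.

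For the reverse inequality, write $\mathcal W=\{W_1,\dots,W_k\}$. Since $H_\mu(\cdot)$ is monotone under refinement, Lemma \ref{la} lets us restrict to partitions $\alpha=\{A_1,\dots,A_k\}\in(\pi^{-1}\mathcal W)^*$, that is, with $A_i\subset\pi^{-1}W_i$. Given such an $\alpha$, we must produce $\beta=\{B_1,\dots,B_k\}$ with $B_i\subset W_i$ and $H_\nu(\beta)\le H_\mu(\alpha)$. The construction is a finite-dimensional concavity argument.
\begin{itemize}
\item[(i)] For $\emptyset\ne J\subset\{1,\dots,k\}$, let $C_J=\{y\in Y: y\in W_i \text{ iff } i\in J\}$. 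These Borel sets form a partition of $Y$.
\item[(ii)] Set $t_{J,i}=\mu(A_i\cap\pi^{-1}C_J)$. Then $t_{J,i}=0$ unless $i\in J$, and $\sum_i t_{J,i}=\nu(C_J)$.
\item[(iii)] The vector $(\mu(A_i))_i$ equals $\big(\sum_J t_{J,i}\big)_i$, a linear image of $t=(t_{J,i})$.
\item[(iv)] Let $Q$ be the polytope of all nonnegative arrays $t$ satisfying the constraints in (ii). The function $t\mapsto-\sum_i\big(\sum_J t_{J,i}\big)\log\big(\sum_J t_{J,i}\big)$ is concave on $Q$, so its minimum over $Q$ is attained at a vertex.
\item[(v)] $Q$ is a product of simplices, so its vertices are exactly the arrays in which each $C_J$ puts its whole mass $\nu(C_J)$ on a single index $i(J)\in J$.
\item[(vi)] Such a vertex is realised by the partition $B_i=\bigcup_{J:\,i(J)=i}C_J$, which satisfies $B_i\subset W_i$ because $i(J)\in J$.
\end{itemize}
Hence $H_\nu(\mathcal W)\le H_\nu(\beta)\le H_\mu(\alpha)$, and taking the infimum over $\alpha$ completes the proof.

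The main obstacle is this second inequality. The atoms of $\alpha$ need not be $\pi$-saturated, so $\alpha$ cannot simply be pushed down to $Y$. The point of the argument is that $H_\mu(\alpha)$ depends only on the finitely many numbers $t_{J,i}$. Replacing $\alpha$ by a "fractional" assignment of the cells $C_J$ is therefore harmless, and concavity of entropy then allows rounding to a genuine partition of $Y$. One detail to check is that the empty atoms allowed in $\U^*$ cause no trouble; they do not, by the convention $0\log 0=0$.
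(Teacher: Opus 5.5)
Your proof is correct. The paper gives no argument for this lemma; it simply quotes Romagnoli \cite{R03}, Proposition 6, so your write-up supplies a proof where the paper has only a citation.

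Your structure is the natural one:
\begin{itemize}
\item the identity $(\pi^{-1}\U)_0^{n-1}=\pi^{-1}(\U_0^{n-1})$ reduces everything to the static equality $H_\mu(\pi^{-1}\mathcal W)=H_{\pi_*\mu}(\mathcal W)$;
\item the inequality $\le$ is the trivial pull-back of partitions;
\item the content of $\ge$ is a well-known fact: the infimum defining $H(\mathcal W)$ is attained among the finitely many partitions finer than $\mathcal W$ whose atoms are unions of the cells $C_J$ of the algebra generated by $\mathcal W$.
\end{itemize}
Once that fact is available on both $X$ and $Y$, the lemma is immediate, because $\pi^{-1}$ is a measure-preserving bijection between such partitions of $Y$ and of $X$.

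Your extreme-point argument over the product of simplices $Q$ proves exactly this fact. It does so in a relative form that handles the non-saturated atoms of $\alpha$ in one step. The factors of $Q$ with $\nu(C_J)=0$ (including $C_J=\emptyset$) are single points, so $i(J)\in J$ may be chosen arbitrarily there, as you implicitly do.

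What your version buys is self-containedness. It uses only Lemma \ref{la}, which is already in the paper, together with concavity of $x\mapsto -x\log x$. The static step also never uses invariance of $\mu$; invariance enters only through the existence of the limit defining $h_\mu(T,\U)$. Taking $\pi=T$, the static identity even yields $H_\mu(T^{-1}\mathcal W)=H_\mu(\mathcal W)$ for $\mu\in M(X,T)$, which is the ingredient behind that subadditivity.
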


A continuous surjective map $\pi:X\to Y$  is  said to be an {\it open} map  if, for any open subset $O$ of $X, \pi(O)$ is open in $Y$.
Define $\pi^{-1}:Y\to 2^X$  as $$\pi^{-1}(y)=\pi^{-1}\{y\}, \ \forall y\in Y.$$

We have the following lemma (see, e.g., \cite[Appendix A.7]{V93}).

\begin{lem}\label{l5}
	Let  $X,\ Y$ be compact metric spaces, and $\pi:X\to Y$ be a continuous surjective map. Then
	$\pi$ is open if and only if $\pi^{-1}:Y\to 2^X$ is continuous.
\end{lem}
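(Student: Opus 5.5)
We prove the two implications separately, using the Hausdorff metric $d_H$ on $2^X$; the converse direction is where openness really enters.

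\emph{Continuity implies openness.} Suppose $\pi^{-1}:Y\to 2^X$ is continuous, let $O\subset X$ be open and $y_0=\pi(x_0)$ with $x_0\in O$. Choose $\ep>0$ with $B_d(x_0,\ep)\subset O$. By continuity there is $\eta>0$ such that $d_H(\pi^{-1}(y),\pi^{-1}(y_0))<\ep$ whenever $d(y,y_0)<\eta$. For such $y$, the inclusion $\pi^{-1}(y_0)\subset B_\ep(\pi^{-1}(y))$ gives a point $x\in\pi^{-1}(y)$ with $d(x,x_0)<\ep$. Then $x\in O$ and $y=\pi(x)\in\pi(O)$, so $\pi(O)$ contains a neighbourhood of $y_0$ and is open.

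\emph{Openness implies continuity.} Assume $\pi$ is open, fix $y_0\in Y$ and $\ep>0$. We must show $d_H(\pi^{-1}(y),\pi^{-1}(y_0))<\ep$ for $y$ near $y_0$, which splits into two semicontinuity statements.

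\emph{Upper semicontinuity}, i.e.\ $\pi^{-1}(y)\subset B_\ep(\pi^{-1}(y_0))$ for $y$ near $y_0$, needs only continuity and compactness. Suppose not: there are $y_k\to y_0$ and $x_k\in\pi^{-1}(y_k)$ with $d(x_k,\pi^{-1}(y_0))\ge\ep$. Pass to a convergent subsequence $x_k\to x$. Then $\pi(x)=y_0$ by continuity, while $d(x,\pi^{-1}(y_0))\ge\ep$, a contradiction.

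\emph{Lower semicontinuity}, i.e.\ $\pi^{-1}(y_0)\subset B_\ep(\pi^{-1}(y))$ for $y$ near $y_0$, is the main point and uses openness. Cover the compact set $\pi^{-1}(y_0)$ by finitely many balls $B_d(z_i,\ep/2)$, $i=1,\dots,k$, with $z_i\in\pi^{-1}(y_0)$. Each $\pi(B_d(z_i,\ep/2))$ is open and contains $y_0$, so $W=\bigcap_i\pi(B_d(z_i,\ep/2))$ is an open neighbourhood of $y_0$. For $y\in W$ and each $i$, pick $w_i\in\pi^{-1}(y)$ with $d(w_i,z_i)<\ep/2$. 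Any $x\in\pi^{-1}(y_0)$ lies within $\ep/2$ of some $z_i$, hence within $\ep$ of $w_i\in\pi^{-1}(y)$.

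Combining the two semicontinuity statements gives continuity of $\pi^{-1}$ at $y_0$, and hence on $Y$.
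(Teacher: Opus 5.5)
Your proof is correct. The paper does not prove this lemma; it only cites \cite[Appendix A.7]{V93}, so your argument is a self-contained proof rather than a variant of one in the text. All the steps hold:
\begin{itemize}
\item In the first direction, you attain $d(x_0,\pi^{-1}(y))$ using compactness of the fibre $\pi^{-1}(y)$.
\item In the converse, you split $d_H$-continuity into two parts. Upper semicontinuity needs only compactness, and is really the statement that $\pi$ is a closed map. Lower semicontinuity you correctly tie to openness, via the strict $\ep/2+\ep/2$ estimate.
\end{itemize}

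The usual textbook route makes the same decomposition in Vietoris language, without any $\ep$'s. For open $U\subset X$ one has $\{y:\pi^{-1}(y)\cap U\neq\emptyset\}=\pi(U)$ and $\{y:\pi^{-1}(y)\subset U\}=Y\setminus\pi(X\setminus U)$. Hence continuity of $\pi^{-1}:Y\to 2^X$ is equivalent to $\pi$ being both open and closed. Closedness is automatic for a continuous map from a compact space to a Hausdorff one. That version is shorter and works for compact Hausdorff spaces. Yours works directly with the Hausdorff metric $d_H$, which is the form used later in the proof of Lemma \ref{lem:lift}, where uniform continuity of $\pi^{-1}$ with respect to $d_H$ is invoked.
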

	Let $(Y,S)$ be a TDS. $(X,T)$ is defined as  the {\it natural extension} of $(Y,S)$, where
  $$X=\{(y_1,y_2,\ldots)\in Y^\N: Sy_{i+1}=y_i,\ i\in\N\},$$
 and $T:X\to X$ is given by
 $$T(y_1,y_2,\ldots)=(Sy_1,y_1,y_2,\ldots),$$
  for every $(y_1,y_2,\ldots)\in X$.
It is known that $(X,T)$ forms an invertible TDS.
 Let $p$ be the projection map that assigns each element of $X$ to its first component. It is straightforward to verify that $p$ is an open factor map between $(X,T)$ and $(Y,S)$.

Let  $\pi:(X,T)\to (Y,S)$ be a factor map,  and $\mathbfcal{F}=\{{\bf f}_n\}_{n=1}^\infty$ be a block sub-additive potential on $Y$. Define $\mathbfcal{F}_\pi=\{{\bf f}_n\circ\pi^{(n)}\}_{n=1}^\infty$ with $$({\bf f}_n\circ\pi^{(n)})(x_0,\ldots,x_{n-1})={\bf f}_n\big(\pi(x_0),\ldots,\pi(x_{n-1})\big),\ \forall (x_0,\ldots,x_{n-1})\in X^n.$$
Then $\mathbfcal{F}_\pi$ is a block sub-additive potential on $X$.

The following is the pivotal lemma for lifting average pseudo-orbits.
\begin{lem}\label{lem:lift}
	Let $\pi:(X,T)\to (Y,S)$ be an open factor map, $\mu\in M(Y,S),$ $\U\in \mathcal{C}_Y^o$ and $\mathbfcal{F}=\{{\bf f}_n\}_{n=1}^\infty$ be a block sub-additive potential on $Y$. Then 	\begin{equation*}
		\begin{split}
	GP_\mu (S,\mathbfcal{F},\U)=\max_{\pi_*\nu=\mu,\nu\in M(X,T)}GP_\nu (T,\mathbfcal{F}_\pi,\pi^{-1}\U).
		\end{split}
	\end{equation*}
	
\end{lem}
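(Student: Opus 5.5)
We prove the two inequalities separately. For the easy direction ($\geq$), fix $\nu\in M(X,T)$ with $\pi_*\nu=\mu$, a finite $L\subset C(Y)$ and $\delta>0$. Put $L'=\{g\circ\pi:g\in L\}\subset C(X)$. By uniform continuity of $\pi$ on the compact space $X$, choose $\delta'\in(0,\delta)$ such that $d_X(a,b)\le\delta'$ implies $d_Y(\pi a,\pi b)\le\delta$. With suitably smaller $\delta'$ (so that average closeness also passes through $\pi$: split indices where $d(Tx_i,x_{i+1})\le\sqrt{\delta'}$ as in Remark \ref{rem:average-pseudo-orbit}(1)), the coordinatewise map $\pi^{(n)}$ sends $X^n_{\delta',\nu,L'}$ into $Y^n_{\delta,\mu,L}$, since $\int g\circ\pi\,d\nu=\int g\,d\mu$.

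Now take a cover $\V$ of $Y^n_{\delta,\mu,L}$ with $\V\succeq\U^n$. Pulling back by $\pi^{(n)}$ gives a cover of $X^n_{\delta',\nu,L'}$ that refines $(\pi^{-1}\U)^n$, and its suprema of $e^{{\bf f}_n\circ\pi^{(n)}}$ are no larger. Hence
\[
GP_{n,\delta',\nu,L'}(T,\mathbfcal{F}_\pi,\pi^{-1}\U)\le GP_{n,\delta,\mu,L}(S,\mathbfcal{F},\U),
\]
and taking limits and infima gives $GP_\nu\le GP_\mu$.

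For the hard direction, we find $\nu$ attaining $GP_\mu$, imitating the proof of Theorem \ref{thm:local variational principle}.

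\textbf{Step 1 (lifting with small average error).} Using openness via Lemma \ref{l5}, $y\mapsto\pi^{-1}(y)$ is uniformly continuous into $(2^X,d_H)$. So for every $\eta>0$ there is $\theta>0$ such that $d(Sy,y')<\theta$ implies $d_H(\pi^{-1}(Sy),\pi^{-1}(y'))<\eta$. Given ${\bf y}\in Y^n_\delta$, lift inductively: pick any $x_0\in\pi^{-1}(y_0)$, and then choose $x_{i+1}\in\pi^{-1}(y_{i+1})$ closest to $Tx_i\in\pi^{-1}(Sy_i)$. On good indices (those with $d(Sy_i,y_{i+1})\le\sqrt\delta<\theta$) we get $d(Tx_i,x_{i+1})<\eta$; bad indices number at most $n\sqrt\delta$. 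Hence ${\bf x}\in X^n_{\eta'}$ with $\eta'\to0$ as $\delta\to0$. This lifting step is the main obstacle, and the uniform Hausdorff continuity is exactly what makes it work.

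\textbf{Step 2 (compactness selection).} Fix $L'\subset C(X)$ finite and $\delta$. Cover $M(X)$ by finitely many $L'$-neighbourhoods of points $\nu'\in F$. Every lifted tuple lies in some $X^n_{\eta',\nu',L'}$, so some $\nu'$ captures a $1/|F|$ share of the covering number of $Y^n_{\delta,\mu,L}$ along a subsequence. For this we pull back covers: if $\mathcal W$ covers the set of lifts with $\mathcal W\succeq(\pi^{-1}\U)^n$, then the images $\pi^{(n)}(W)$, made into a partition, give a cover of the relevant part of $Y^n_{\delta,\mu,L}$ refining $\U^n$ with equal suprema. So the $\nu'$-pressure dominates the $\mu$-pressure up to $\log|F|/n$.

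Moreover $\pi_*\nu'$ is close to $\mu$ on $L$, because the empirical measures of ${\bf y}$ and of its lift ${\bf x}$ correspond under $\pi$.

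\textbf{Step 3 (diagonal limit).} Take $L'_m$ increasing and dense, $\delta_m\to0$, and a limit $\nu$ of $\nu'_m$. Exactly as in Theorem \ref{thm:local variational principle}, $\nu$ is $T$-invariant (since the lifted tuples are average pseudo-orbits with error $\to0$) and satisfies $GP_\nu\ge GP_\mu$. Also $\pi_*\nu=\mu$, since $\pi_*\nu'_m\to\mu$. Combined with the first direction, this shows the maximum is attained.
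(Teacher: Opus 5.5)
Your proposal is correct and follows essentially the same route as the paper. For the inequality $GP_\nu(T,\mathbfcal{F}_\pi,\pi^{-1}\U)\le GP_\mu(S,\mathbfcal{F},\U)$ you pull back covers along $\pi^{(n)}$. For the reverse inequality you lift average pseudo-orbits using the uniform Hausdorff continuity of $y\mapsto\pi^{-1}(y)$, make a finite compactness selection of $\nu'$ with $\pi_*\nu'$ close to $\mu$, and pass to a diagonal limit, with $T$-invariance argued as in Theorem \ref{thm:local variational principle}.

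One technical point needs tightening. The images $\pi^{(n)}(W)$ of Borel sets need not be Borel, and disjointifying them does not change that, so they are not yet an admissible cover in $\mathcal{C}_{Y^n_{\delta,\mu,L}}$.

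\begin{itemize}
\item The paper handles this by first enlarging each $W$ to an open set inside the same element of $(\pi^{-1}\U)^n$, without essentially increasing the supremum. Since $\pi^{(n)}$ is open, the images are then open.
\item Alternatively, you can replace each image by the relatively open set $U\cap\{{\bf y}:{\bf f}_n({\bf y})<\sup_{\pi^{(n)}(W)}{\bf f}_n+\eta\}$, where $U$ is the corresponding element of $\U^n$. This costs only a factor $e^{\eta}$, by continuity of ${\bf f}_n$.
\end{itemize}
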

\begin{proof}
 LHS $\leq$ RHS: It is sufficient to  consider the case  $GP_\mu (S,\mathbfcal{F},\U)>-\infty.$ For simplicity, we adopt the equivalent definition of local measure-theoretic pressure, as outlined in Remark \ref{rem:measure-pressure-GSP} (2).

 We begin with a claim as follows.

 	{\bf Claim:} For any  $\ep>0$, there exists $\nu\in M(X)$ with $D(\pi_*\nu,\mu)<\ep$ such that
 $$GP_{\ep,\nu} (T,\mathbfcal{F}_\pi,\pi^{-1}\U)\geq GP_\mu(S,\mathbfcal{F},\U).$$

 {\it Proof of Claim}:
	Since $\pi$ is open, by Lemma \ref{l5} $\pi^{-1}:Y\to 2^X$ is continuous, and hence uniformly continuous.
	For $\ep>0,$ there exists $0<\delta<\min\big\{\big(\frac{\ep}{2\diam(X)}\big)^2,\ep\big\}$ such that whenever $y_1,y_2\in Y$ with $d(y_1,y_2)<\sqrt{\delta}$ we have
$$d_H(\pi^{-1}\{y_1\},\pi^{-1}\{y_2\})<\frac{\ep}{2}.$$
	By compactness of $M(X)$, we can choose a finite set $F\subset\pi_*^{-1}\overline{B_D(\mu, \delta)}$ such that
	$$\pi_*^{-1}\overline{B_D(\mu, \delta)}\subset \bigcup_{\nu\in F}B_D(\nu, \ep).$$
	For $n\in\N$, Define
$$
X^n_{\pi,\ep, \mu}:=\{(x_0,\ldots,x_{n-1})\in X_\ep^n: \frac{1}{n}\sum_{i=0}^{n-1}\delta_{\pi(x_i)}\in \overline{B_D(\mu, \delta)}\}.
$$
Then we have $$X^n_{\pi,\ep, \mu}\subset\bigcup_{\nu\in F}X^n_{\ep,\nu}.$$
	It follows that there exists $\nu_n\in F$ such that
$$|F|GP_{n,\ep,\nu_n}(T,\mathbfcal{F}_\pi,\pi^{-1}\U) \geq \inf\{\sum_{V\in \V}\sup_{{\bf x}\in V}\text{e}^{{\bf f}_n\circ\pi^{(n)}({\bf x})}:\V\in \mathcal{C}_{X^n_{\pi,\ep, \mu}},\V\succeq(\pi^{-1}\U)^n\}.$$

Take $\V\in  \mathcal{C}_{X^n_{\pi,\ep, \mu}}$ with $\V\succeq(\pi^{-1}\U)^n$. In the following, we consider the estimation of   $\sum_{V\in \V}\sup_{{\bf x}\in V}\text{e}^{{\bf f}_n\circ\pi^{(n)}({\bf x})}$. Carefully enlarge each  $V\in \V$ to an open subset $V^\prime\in X^n$ to form a new collection  $\V^\prime$, ensuring that $\V^\prime\succeq (\pi^{-1}\U)^n$.
Denote $\V_\pi=\{\pi^{(n)}(V)\cap Y^n_{\delta,\mu}:V\in\V\}$ and
$\V_\pi^\prime=\{\pi^{(n)}(V^\prime)\cap Y^n_{\delta,\mu}:V^\prime\in\V^\prime\}$. It is clear that $\V_\pi\succeq\V_\pi^\prime\succeq \U^n.$
Notably, $\V_\pi$ is not necessarily a Borel cover of $Y^n_{\delta,\mu}$, as the image of a Borel measurable set under an open map may not retain Borel measurability. However,  we can show that
(a) $\V_\pi$  covers $Y^n_{\delta,\mu}$, and
(b) $\V_\pi^\prime$ is an open cover of $Y^n_{\delta,\mu}$.

 To see it, let $(y_i)_{i=0}^{n-1}\in Y^n_{\delta,\mu}.$ First choose an arbitrary $x_0\in \pi^{-1}\{y_0\}.$ The choice of  $x_1$ adheres to the following rule:
If $d(Sy_0,y_1)\geq\sqrt{\delta}$, then $x_1$ is selected from  $\pi^{-1}\{y_1\}$. Conversely, if $d(Sy_0,y_1)<\sqrt{\delta}$, then $$d_H(\pi^{-1}\{Sy_0\},\pi^{-1}\{y_1\})<\frac{\ep}{2},$$
and so
$$Tx_0\in\pi^{-1}\{Sy_0\}\subset B_{\frac{\ep}{2}}(\pi^{-1}\{y_1\}).$$
In this case, $x_1$ is chosen from $\pi^{-1}\{y_1\}$ such that $d(Tx_0,x_1)<\frac{\ep}{2}.$ Continuing this procedure we can find $x_i, i=0,1,\ldots,n-1$ such that $\pi^{(n)}(x_i)_{i=0}^{n-1}=(y_i)_{i=0}^{n-1}$ and if
$d(Sy_i,y_{i+1})<\sqrt{\delta}$ then
$$ d(Tx_i,x_{i+1})<\frac{\ep}{2}, i=0\ldots,n-2.$$
Given that  $(y_i)_{i=0}^{n-1}\in Y^n_{\delta,\mu}$, and considering Remark \ref{rem:average-pseudo-orbit} (1) along with the choice of $\delta$, it is not hard to verify that $(x_i)_{i=0}^{n-1}\in X^n_{\pi,\ep, \mu}.$ Since $\V\in  \mathcal{C}_{X^n_{\pi,\ep, \mu}}$, then there exists $V\in \V$ such that $(x_i)_{i=0}^{n-1}\in V\subset V^\prime$. Consequently, $(y_i)_{i=0}^{n-1}\in \pi^{(n)} (V)\subset \pi^{(n)} (V^\prime)$, showing that $\V_\pi$, and so $\V_\pi^\prime$,  covers $Y^n_{\delta,\mu}$.
Furthermore, as $\pi$ is open,  it follows that $\pi^{(n)}$ is also open.  Consequently,  each element of  $\V_\pi^\prime$ is open in $Y^n_{\delta,\mu}$, thereby forming an open cover of  $Y^n_{\delta,\mu}$.

Now we have
		 \begin{equation*}
		\begin{split}
		\sum_{V\in\V}\sup_{{\bf x}\in V}\text{e}^{{\bf f}_n\pi^{(n)}({\bf x})}
		\ge\sum_{V^\prime\in\V^\prime}\sup_{{\bf y}\in \pi^{(n)}(V^\prime)\cap Y^n_{\delta,\mu}}\text{e}^{{\bf f}_n({\bf y})}
\geq GP_{n,\delta,\mu} (S,\mathbfcal{F},\U),
		\end{split}
	\end{equation*}
where the two inequalities follow from (a) and (b) respectively.
Since $\V$ is arbitrary, we obtain that
$$	|F|GP_{n,\ep,\nu_n}(T,\mathbfcal{F}_\pi,\pi^{-1}\U) \geq GP_{n,\delta,\mu} (S,\mathbfcal{F},\U).$$
	Choose a sequence $\{n_i\}_{i=1}^\infty\subset \N$ such  that $$ \lim_{i\to\infty}\frac{1}{n_i}\log GP_{n_i,\delta,\mu}(S,\mathbfcal{F},\U)= GP_{\delta,\mu}(S,\mathbfcal{F},\U),$$
	 and  $\{\nu_{n_{i}}\}_{i=1}^\infty=\{\nu\}$ for some $\nu\in F$.
It follows that
$$GP_{\ep,\nu}(T,\mathbfcal{F}_\pi,\pi^{-1}\U)\geq GP_{\delta,\mu}(S,\mathbfcal{F},\U)\geq GP_\mu(S,\mathbfcal{F},\U).$$
Moreover, by the choice of $\nu$, we have $D(\pi_*\nu,\mu)<\ep$. This completes the proof of the Claim.

\medskip
By Claim, for any $m\in\N,$ there exists $\nu_m\in M(X)$ with $D(\pi_*\nu_m,\mu)<\frac{1}{m}$ such that
$$GP_{\frac{1}{m},\nu_m}(T,\mathbfcal{F}_\pi,\pi^{-1}\U)\geq GP_\mu(S,\mathbfcal{F},\U).$$
Passing to a subsequence if necessary, we assume that $\nu_{m}\to \nu\in M(X).$  It is clear that $\pi_*\nu=\mu.$
For any $\delta>0$, choose sufficiently large $m\in\N$ such that $\frac{1}{m}<\frac{\delta}{2}$ and $D(\nu_m,\nu)<\frac{\delta}{2}$. Then it follows that for any $n\in\N$,
$X^n_{\frac{1}{m},\nu_m}\subset X^n_{\delta,\nu}.$
Consequently,
$$GP_{\delta,\nu}(T,\mathbfcal{F}_\pi,\pi^{-1}\U)\geq GP_{\frac{1}{m},\nu_n}(T,\mathbfcal{F}_\pi,\pi^{-1}\U),$$
and then
$$ GP_\nu (T,\mathbfcal{F}_\pi,\pi^{-1}\U)\geq GP_\mu (S,\mathbfcal{F},\U).$$
Moreover, analogous to the arguments in Theorem \ref{thm:local variational principle},  we can show that
$\nu$ is $T$-invariant. This completes the proof of  LHS $\le$ RHS.

\medskip
 LHS $\geq$ RHS: Take $\nu\in M(X,T) $ with $\pi_*(\nu)=\mu$. Given $\delta>0,$ there exists $0<\a<\min\{\big(\frac{\delta}{2\diam(Y)}\big)^2,\delta\}$ such that if $x_1,x_2\in X$ with $d(x_1,x_2)<\sqrt{\a}$, then $$\text{d}\big(\pi (x_1),\pi (x_2)\big)<\frac{\delta}{2},$$
and if $\nu^\prime \in M(X)$ with
 $D(\nu^\prime,\nu)\leq\a$,
 $$D\big(\pi_* (\nu^\prime), \mu\big)<\delta.$$

 Take $\V\in  \mathcal{C}_{Y^n_{\delta,\mu}}$ with $\V\succeq\U^n$. Denote $\V_{\pi^{-1}}=\{(\pi^{(n)})^{-1}V:V\in\V\}$. It is clear that $\V_{\pi^{-1}}\succeq(\pi^{-1}\U)^n.$ Analogous to the previous discussion, we obtain that
  $$ X^n_{\a,\nu}\subset \bigcup_{V\in\V}(\pi^{(n)})^{-1}V.$$
As $\pi$ is continuous, it follows that each element of $\V_{\pi^{-1}}$ is Borel measurable. Hence   $(\V_{\pi^{-1}})|_{X^n_{\a,\nu}}\in \mathcal{C}_{X^n_{\a,\nu}}.$
This leads to the following
\begin{align*}
	  			\sum_{V\in\V}\sup_{{\bf y}\in V}\text{e}^{{\bf f}_n({\bf y})}&\geq	\sum_{V\in\V}\sup_{{\bf x}\in (\pi^{(n)})^{-1}V}\text{e}^{{\bf f}_n\pi^{(n)}({\bf x})}\\
	  				&\geq\inf\{\sum_{V\in \V}\sup_{{\bf x}\in V}\text{e}^{{\bf f}_n\pi^{(n)}({\bf x})}:\V\in \mathcal{C}_{X^n_{\a,\nu}},\V\succeq(\pi^{-1}\U)^n\}.
\end{align*}
 Hence,
  $$GP_\mu (S,\mathbfcal{F},\U)\ge GP_\nu (T,\mathbfcal{F}_\pi,\pi^{-1}\U).$$
Since $\nu$ is arbitrary in $M(X,T)$,  it follows that LHS $\geq$ RHS. This completes the whole proof.
\end{proof}

Now we are ready to finish the complete proof of Theorem  \ref{thm:measure-relation}.
\begin{customthm}{{\bf Theorem \ref{thm:measure-relation}}  (The non-invertible  case)}
{\it Let $(Y,S)$ be an TDS, $\mathbfcal{F}=\{{\bf f}_n\}_{n=1}^\infty$ be  a block sub-additive potential, $\mu\in M(Y,S)$ and $\U\in \C_Y^o$. Then for the sub-additive potential $\mathcal{F}$ induced by $\mathbfcal{F}$, we have
 $$ GP_\mu (S,\mathbfcal{F},\U)\leq P_\mu(S,\mathcal{F}, \U)= h_{\mu}(S,\U)+\mathcal{F}_*(\mu).$$
If additionally $\mu$ is ergodic, then
$$ GP_\mu (S,\mathbfcal{F},\U)=P_\mu(S,\mathcal{F}, \U)= h_{\mu}(S,\U)+\mathcal{F}_*(\mu).$$}
\end{customthm}
\begin{proof}
Observe that the proof of Step 2 in \ref{thm:measure-relation:invertible} remains valid even in the non-invertible case. It suffices to demonstrate that
$$ GP_\mu (S,\mathbfcal{F},\U)\leq h_{\mu}(S,\U)+\mathcal{F}_*(\mu).$$

Let $(X,T)$ be the natural extension of $(Y,S)$ and $p$ be the open factor map between $(X,T)$ and $(Y,S)$. By Lemma \ref{lem:lift} we have
$$	GP_\mu (S,\mathbfcal{F},\U)=\max_{p_*\nu=\mu,\nu\in M(X,T)}GP_\nu (T,\mathbfcal{F}_p,p^{-1}\U).$$
Since $(X,T)$ is an invertible TDS, then from Step 1 of  \ref{thm:measure-relation:invertible}, we obtain that for any $v\in M(X,T)$ with $p_*\nu=\mu$,
  $$GP_\nu (T,\mathbfcal{F}_p, p^{-1}\U)\leq h_{\nu}(T,p^{-1}\U)+\lim_{n\to \infty}\frac{1}{n}\int { f_n}\circ p^{(n)} \text{d}\nu.$$
Note that by Lemma\ \ref{ld},
 $$h_{\nu}(T,p^{-1}\U)=h_{p_*\nu}(S,\U)=h_{\mu}(S,\U).$$
Hence we have $$ GP_\mu (S,\mathbfcal{F},\U)\leq h_{\mu}(S,\U)+ \lim_{n\to \infty}\frac{1}{n}\int {f_n} \ \text{d}\mu.$$
This completes the proof.
\medskip
\end{proof}

\section{Topological proof of Theorem \ref{thm:topological-relation}}\label{Sect:proof-thm3}
This section is dedicated  to presenting a  topological proof of Theorem \ref{thm:topological-relation}.

\begin{customthm}{{\bf Theorem \ref{thm:topological-relation}}}
{\it	Let $(X,T)$ be a TDS, $\mathbfcal{F}=\{{\bf f}_n\}_{n=1}^\infty$ be a block sub-additive potential and $\U\in \C_X^o$. Then for the sub-additive potential $\mathcal{F}$ induced by $\mathbfcal{F}$, we have
	$$GP(T,\mathbfcal{F},\U)= P(T,\mathcal{F},\U).$$}
\end{customthm}

\begin{proof}
By definition,  we have $ P(T,\mathcal{F},\U)\leq GP (T,\mathbfcal{F},\U)$.
Next, we will show the opposite direction. Fix $\ep>0.$ We only need to prove that
$$ GP(T,\mathbfcal{F},\U)< P(T,\mathcal{F},\U)+\ep.$$

By definition,
	$$P(T,\mathcal{F},\U)=\lim\limits_{n\to\infty}\frac{1}{n}\log\inf\{\sum_{V\in \V}\sup_{x\in V}\text{e}^{f_n(x)}:\V\in \mathcal{C}_{X},\V\succeq\U_0^{n-1}\}.$$
Then for the fixed $\ep$,  there exists an $N\in\N$, and $\V_0\in \mathcal{C}_{X}$ with $\V_0\succeq\U_0^{N-1}$,  such that
	$$\sum_{V\in \V_0}\sup_{x\in V}\text{e}^{{f}_N(x)}<\text{e}^{N(P(T,\mathcal{F},\U)+\frac{\ep}{2})}.$$
	Let $\phi_N:X\to X^N$ be defined as
$$\phi_N(x)=(x,Tx,\ldots,T^{N-1}x).$$
Then, the embedding map $\phi_N$  is an isomorphism onto its image. It follows that
$\phi_N(\V_0)=\{\phi_N(V):V\in\V_0\}\in \C_{\phi_N(X)}$, $\phi_N(\V_0)\succeq\U^{N},$
  and
	$$\sum_{V\in \phi_N(\V_0)}\sup_{{\bf x}\in V}\text{e}^{{\bf f}_N({\bf x})}<\text{e}^{N(P(T,\mathcal{F},\U)+\frac{\ep}{2})}.$$
Since ${\bf f}_N$ is continuous on $X^N$,  by carefully choosing an open neighborhood  for each $V\in \phi_N(\V_0)$, we can derive a finite collection of open sets in $X^N$, denoted by $\V_1$, such that $\V_1\succeq\U^{N}$, $\V_1$ covers $\phi_N(X)$ and
\begin{equation}\label{equation-6}
\sum_{V\in \V_1}\sup_{{\bf x}\in V}\text{e}^{{\bf f}_N({\bf x})}<\text{e}^{N(P(T,\mathcal{F},\U)+\frac{\ep}{2})}.
\end{equation}
Observe that  $\cup_{V\in \V_1}V$, which contains $\phi_N(X)$, forms an open subset of $X^N$,   and $\phi_N(X)$  is closed in $X^N$. Consequently, we have
$$s:=d_N((\cup_{V\in \V_1} V)^c, \phi_N(X))=\inf_{{\bf x}\in (\cup_{V\in \V_1} V)^c, {\bf y}\in \phi_N(X)}d_N({\bf x}, {\bf y})>0.$$
Denote $X_{s,N}=\overline{B_{d_N}(\phi_N(X), {s}/{2})}$. It is clear that  $\V_1|_{X_{s,N}}\in \C^o_{X_{s,N}}$. Let $\delta$ be the Lebesgue number of this cover such that $0<\delta<{s}/{2}$.
 By continuity, there exist positive numbers $\delta_0, \delta_1$ with
 $0<\delta_1<\delta_0<\frac{\delta}{N-1}$, such that if $x, y\in X$ with $d(x,y)<\delta_1$, then
 $$\max_{0\le i\le N-1} d(T^i x, T^i y)<\delta_0.$$
Hence,  for any $(N, \delta_1)$-pseudo-orbit $(x_0,x_1,\ldots,x_{N-1})$ in $X^N$,
 we have
 $$ d_N((x_0,x_1,\ldots,x_{N-1}),(x_0,Tx_0,\ldots,T^{N-1}x_{0}))=\max_{0\leq i\leq N-1}d(x_i,T^ix_0)<\delta.$$
This means that  $\V_1$ covers the set of all  $(N, \delta_1)$-pseudo-orbits.

\smallskip
We can further require that  the aforementioned  $\delta_1$  be sufficiently small such that
$$\delta_1<\frac{\ep}{8(|\U|+||f_1||_\infty)},$$
where $|\U|$ denotes the cardinality of elements in $\U$,
and
$$-(1-2\delta_1)\log (1-2\delta_1)-2\delta_1\log 2\delta_1<\frac{\ep}{4}.$$
Choose $\delta_2$ such that $0<\delta_2<(\frac{\delta_1}{N})^2$.  Next, for any  given  $n>\frac{N}{\delta_1}$ and  $(n,\delta_2)$-average pseudo-orbit $(x_i)_{i=0}^{n-1}\in X^n_{\delta_2}$,  we will estimate  the number of $(N,\delta_1)$-pseudo-orbits within the sequence $(x_i)_{i=0}^{n-1}$. The method is similar to that used in Claim 3 of \ref{thm:measure-relation:invertible}.
Set
$$A=\{0\leq i\leq n-2:d(Tx_{i},x_{i+1})> \sqrt{\delta_2}\}.$$
By Remark \ref{rem:average-pseudo-orbit} (1), we have  $|A|< (n-1)\sqrt{\delta_2}.$
 Set
 $$
 B=\{0\leq i\leq n-N:(x_i,x_{i+1},\ldots,x_{i+N-1})\text{\ is\ not\ an } (N,\delta_1)\text{-pseudo-orbit}\}.
 $$
 It is easy to see that
$$
B\subset \bigcup_{j\in A}\{j-N+2,\ldots,j-1,j\},
$$
and so $$|B|< (N-1)(n-1)\sqrt{\delta_2}<(n-1)\delta_1.$$
Select $i_1$ as the smallest $i\in[0,n-N]$ such that $i\notin B$. Subsequently,  choose $i_2$  as the smallest $i\in[i_1+N,n-N]$ such that $i\notin B.$  Repeating this procedure inductively, we obtain a finite sequence  $\{i_t\}_{t\in \Lambda}$
such that for the given $(x_i)_{i=0}^{n-1}\in X^n_{\delta_2}$,  there are  mutually disjoint
intervals of length $N$ ,  $\{[i_t,i_{t+N-1}]\}_{t\in\Lambda}$, in $[0,n-1]$ such that their total length $$|\cup_{t\in\Lambda}[i_t,i_{t+N-1}]|>n- [(n-1)\delta_1+N-1 ]>n(1-2\delta_1),$$
and on which,  the corresponding sequences  $\{(x_{i_t},x_{i_{t+1}},\ldots,x_{i_{t+N-1}})\}_{t\in \Lambda}$ are $(N, \delta_1)$-pseudo-orbits.
Furthermore, we observe that for all $(x_i)_{i=0}^{n-1}\in X^n_{\delta_2}$, the possible choices of such  intervals are bounded by $(2n\delta_1+1)C^{[2n\delta_1]}_n$.

Let  $\mathcal{I}=\{[i_t,i_{t+N-1}]\}_{t\in\Lambda}$ be one of such choices.
Define
 $$\V_\mathcal{I}=\cdots\V_1\times \U\times\cdots\times\U\times \V_1\cdots,$$
where the indices corresponding to $\V_1$ are precisely those  intervals  $[i_t,i_{t+N-1}]$,  while the indices for  $\U$  consist solely of  individual elements in the set
$$C=[0,n-1]\setminus \cup_{t\in\Lambda}[i_t,i_{t+N-1}].$$
Since $\V_1\succeq\U^{N},$ it is easy to see that $\V_\mathcal{I}\succeq\U^{n}.$
Let $\V^\prime=\bigcup_{\mathcal{I}}\V_\mathcal{I}$, where the union takes over all the possible choices of $\V_\mathcal{I}$. Then we have $\V^\prime\succeq\U^{n}$. Since $\V_1$ covers the set of all $(N, \delta_1)$-pseudo-orbits, then it follows  that $\V^\prime$ covers $ X^n_{\delta_2}$.
Thus, by definition we have
$$GP_{n,\delta_2} (T,\mathbfcal{F},\U)=\inf\{\sum_{V\in \V}\sup_{{\bf x}\in V}\text{e}^{{\bf f}_n({\bf x})}:\V\in \mathcal{C}_{X^n_{\delta_2}},\V\succeq\U^n\}\leq \sum_{V\in \V^\prime}\sup_{{\bf x}\in V}\text{e}^{{\bf f}_n({\bf x})}.$$
Note that
$$\sum_{V\in \V^\prime}\sup_{{\bf x}\in V}\text{e}^{{\bf f}_n({\bf x})}\leq\sum_{\mathcal{I}}\sum_{V\in \V_\mathcal{I}}\sup_{{\bf x}\in V}\text{e}^{{\bf f}_n({\bf x})}.$$

For a fixed  possible choice of intervals $\mathcal{I}=\{[i_t,i_{t+N-1}]\}_{t\in\Lambda},$ let us reuse the notations $\V_\mathcal{I}$ and $C$ as aforementioned before. Observe that  $|\Lambda|<\frac{n}{N}$ and $|C|<2n\delta_1$. Then by sub-additivity of $\{{\bf f}_n\}_{n=1}^\infty,$ and inequality \eqref{equation-6}, we have
	\begin{align*}
\sum_{V\in \V_\mathcal{I}}\sup_{{\bf x}\in V}\text{e}^{{\bf f}_n({\bf x})} 	& =\sum_{\substack{\{V_t\}_{t\in\Lambda}\subset \V_1,\\ \{U_i\}_{i\in C}\subset\U}}\sup_{{\bf x}\in(\prod_{t\in\Lambda}V_t)\times (\prod_{i\in C}U_i)}\text{e}^{{\bf f}_n({\bf x})}\\
	&\leq\sum_{\substack{\{V_t\}_{t\in\Lambda}\subset \V_1,\\ \{U_i\}_{i\in C}\subset\U}}\sup_{{\bf x}\in(\prod_{t\in\Lambda}V_t)\times (\prod_{i\in C}U_i)}\text{e}^{\sum_{t\in\Lambda}{\bf f}_N((x_{i_t+j})_{j=0}^{N-1})+\sum_{i\in C}{\bf f}_1(x_i)}\\
	&\leq |\U|^C\cdot\sum_{\{V_t\}_{t\in\Lambda}\subset \V_1}\prod_{t\in\Lambda}\sup_{(x_{i_t+j})_{j=0}^{N-1}\in V_t}\text{e}^{{\bf f}_N((x_{i_t+j})_{j=0}^{N-1})}\cdot \text{e}^{||{\bf f}_1||_\infty\cdot|C|}\\
	&\le |\U|^{2n\delta_1}\cdot\sum_{\{V_t\}_{t\in\Lambda}\subset \V_1}\prod_{t\in\Lambda}\sup_{{\bf x}\in V_t}\text{e}^{{\bf f}_N({\bf x})}\cdot \text{e}^{2n\delta_1||{\bf f}_1||_\infty}\\
	&\overset{(\star)}{=}|\U|^{2n\delta_1}\cdot\prod_{t\in\Lambda}\sum_{V_t\in \V_1}\sup_{{\bf x}\in V_t}\text{e}^{{\bf f}_N({\bf x})}\cdot \text{e}^{2n\delta_1||{\bf f}_1||_\infty}\\
	&\leq|\U|^{2n\delta_1}\cdot (\text{e}^{N(P(T,\mathcal{F},\U)+\frac{\ep}{2})})^{|\Lambda|}\cdot \text{e}^{2n\delta_1||{\bf f}_1||_\infty}\\
	&\leq  |\U|^{2n\delta_1}\cdot \text{e}^{n(P(T,\mathcal{F},\U)+\frac{\ep}{2})}\cdot \text{e}^{2n\delta_1||{\bf f}_1||_\infty}.
	\end{align*}
where the equality $(\star)$ follows from a fact that
$$\sum_{i_0,i_1,\dots,i_{\ell-1}=0}^m a_{i_0}a_{i_1}\cdots a_{i_{\ell-1}}=(a_{0}+a_{1}+\cdots+a_{m})^\ell.$$
Therefore,
 \begin{equation*}
	\begin{split}
	GP_{n,\delta_2} (T,\mathbfcal{F},\U)	&\leq\sum_{\mathcal{I}}\sum_{V\in \V_\mathcal{I}}\sup_{{\bf x}\in V}\text{e}^{{\bf f}_n({\bf x})}\\
	&\leq (2n\delta_1+1)C^{[2n\delta_1]}_n\cdot|\U|^{2n\delta_1}\cdot \text{e}^{n(P(T,\mathcal{F},\U)+\frac{\ep}{2})}\cdot \text{e}^{2n\delta_1||{\bf f}_1||_\infty},
	\end{split}
\end{equation*}
It follows from definition that
$$
GP_{\delta_2} (T,\mathbfcal{F},\U)\leq\lim\limits_{n\to\infty}\frac{1}{n}\log C^{[2n\delta_1]}_n+2\delta_1|\U|+P(T,\mathcal{F},\U)+\frac{\ep}{2}+2\delta_1||{\bf f}_1||_\infty.
$$
Note that by Stirling's formula,
$$\lim_{n\rightarrow \infty} \frac{1}{n}\log C_n^{[2n\delta_1]}=-(1-2\delta_1)\log (1-2\delta_1)-2\delta_1\log 2\delta_1.$$
Then,  according to  the choice of $\delta_1$,  we have
$$GP_{\delta_2} (T,\mathbfcal{F},\U)\leq P(T,\mathcal{F},\U)+\ep.$$
Thus,
$$GP (T,\mathbfcal{F},\U)=\inf_{\delta>0}GP_\delta (T,\mathbfcal{F},\U)<P(T,\mathcal{F},\U)+\ep.$$
This completes the whole proof.
\end{proof}

\section{Proof of Theorem \ref{thm:global-local}}\label{Sect:proof-thm4}

\begin{customthm}{{\bf Theorem \ref{thm:global-local}}}
{\it Let $(X,T)$ be a TDS, and $\mathbfcal{F}=\{{\bf f}_n\}_{n=1}^\infty$ be a block sub-additive potential. Then	
$$GP(T,\mathbfcal{F})=\sup_{\U\in \mathcal{C}^o_X}GP (T,\mathbfcal{F},\U),$$
and
$$GP_\mu(T,\mathbfcal{F})=\sup_{\U\in \mathcal{C}^o_X}GP_\mu (T,\mathbfcal{F},\U).$$
}
\end{customthm}

\begin{proof}
We only prove that
$$GP(T,\mathbfcal{F})=\sup_{\U\in \mathcal{C}^o_X}GP (T,\mathbfcal{F},\U),$$
 as the proof for the measure-theoretic case is essentially analogous.

\medskip
	LHS $\ge$ RHS: Take $\U\in \mathcal{C}^o_X.$ Let $\ep>0$ be such that the Lebesgue number of $\U$ is less than $2\ep$.
 For $n\in\N$ and $\delta>0$, choose ${\bf x}_1\in X_\delta^n$  such that
 $$\text{e}^{{\bf f}_n({\bf x}_1)}=\max\limits_{{\bf x}\in X_\delta^n} \text{e}^{{\bf f}_n({\bf x})}.$$
  If $X_\delta^n-B_\ep({\bf x}_1)\neq\emptyset,$ then choose ${\bf x}_2\in X_\delta^n-B_\ep({\bf x}_1)$ such that $$\text{e}^{{\bf f}_n({\bf x}_2)}=\max\limits_{{\bf x}\in X_\delta^n-B_\ep({\bf x}_1)} \text{e}^{{\bf f}_n({\bf x})}.$$
   If $X_\delta^n-(B_\ep({\bf x}_1)\bigcup B_\ep({\bf x}_2))\neq\emptyset,$ then choose ${\bf x}_3\in X_\delta^n-(B_\ep({\bf x}_1)\bigcup B_\ep({\bf x}_2))$ such that
   $$\text{e}^{{\bf f}_n({\bf x}_3)}=\max\limits_{{\bf x}\in X_\delta^n-(B_\ep({\bf x}_1)\bigcup B_\ep({\bf x}_2))} \text{e}^{{\bf f}_n({\bf x})}.$$
   Continuing this procedure inductively,  and due to the compactness of $X_\delta^n$, the process eventually terminates after a finite number of steps. Consequently, we obtain a subset $\{{\bf x}_1,\ldots,{\bf x}_m\}\subset X_\delta^n$ such that $$\text{e}^{{\bf f}_n({\bf x}_i)}=\max\limits_{{\bf x}\in X_\delta^n-\bigcup_{j=1}^{i-1} B_\ep({\bf x}_j)} \text{e}^{{\bf f}_n({\bf x})},\ i=1,\ldots,m,$$
   with the convention that $\bigcup_{j=1}^{0} B_\ep({\bf x}_j)=\emptyset,$ and $X_\delta^n\subset\bigcup_{j=1}^{m} B_\ep({\bf x}_j).$
Define the sets
 $$V_i=X_\delta^n\cap \Big(B_\ep({\bf x}_i)-\bigcup_{j=1}^{i-1}B_\ep({\bf x}_j)\Big), \ i=1,\ldots,m.$$
Let $\V=\{V_1,\ldots,V_m\}.$
Then $\V\in \mathcal{C}_{X^n_\delta}$ and $\V\succeq\U^n.$
It follows that
$$\sum_{i=1}^m\text{e}^{{\bf f}_n({\bf x}_i)}\ge \sum_{i=1}^m\sup_{{\bf x}\in V_i}\text{e}^{{\bf f}_n({\bf x})}\ge GP_{n,\delta} (T,\mathbfcal{F},\U) .$$
Furthermore, by the construction, the set $\{{\bf x}_1,\ldots,{\bf x}_m\}$ is identified as an  $(n,\ep)$-separated subset of $X_\delta^n$. Consequently,
$$
GP_{n, \delta}(T, \mathbfcal{F}, \ep)\ge GP_{n,\delta} (T,\mathbfcal{F},\U).
$$
It follows that
$$
GP(T,\mathbfcal{F})\ge \sup_{\U\in \mathcal{C}^o_X}GP (T,\mathbfcal{F},\U).
$$

\medskip
LHS $\le$ RHS: Given $\ep>0$,  select $\U\in \mathcal{C}^o_X$ such that the diameter of $\U$, denoted as  $\diam(\U)=\max_{U\in\U}\diam(U)$, is less than $\ep$.  For $n\in\N$ and $\delta>0$,  let $E$ be an $(n,\ep)$-separated subset of  ${X^n_\delta}$. For any $\V\in \mathcal{C}_{X^n_\delta}$ with $\V\succeq\U^n,$ it is easy to see that each element of $\V$ contains at most one point from $E$.
Consequently,
 $$\sum_{{\bf x}\in E}\text{e}^{{\bf f}_n({\bf x})}\le \sum_{V\in\V}\sup_{{\bf x}\in V}\text{e}^{{\bf f}_n({\bf x})}.$$
Given the arbitrary choices of $E$ and $\V$, we have
$$GP_{n, \delta}(T, \mathbfcal{F}, \ep)\le GP_{n,\delta} (T,\mathbfcal{F},\U).$$
It follows that
$$
GP(T,\mathbfcal{F})\le \sup_{\U\in \mathcal{C}^o_X}GP (T,\mathbfcal{F},\U).
$$
This completes the whole proof.
\end{proof}

\section*{Acknowledgments}
Research of F.~Cai  was supported by NNSF of China (Grant No.  12301225),  and J.~Li was supported by NNSF of China (Grant No. 12031019).

%

\end{document}